\documentclass[12pt]{article}

\usepackage{amsmath,amssymb,amsthm}
\usepackage{geometry}
\usepackage{hyperref}
\usepackage{enumitem}

\newtheorem{theorem}{Theorem}[section]
\newtheorem{proposition}[theorem]{Proposition}
\newtheorem{corollary}[theorem]{Corollary}
\newtheorem{lemma}[theorem]{Lemma}
\theoremstyle{definition}
\newtheorem{definition}[theorem]{Definition}
\newtheorem{remark}[theorem]{Remark}

\title{$\eta-$ Hyperbolic Ricci Solitons on the Unit Tangent Bundle of the Hyperbolic Strip}

\author{Amadou Sy\thanks{Département de Mathématiques et Informatique, FST,
Université Cheikh Anta Diop de Dakar, Sénégal.}
\and 
Ameth Ndiaye\thanks{Département de Mathématiques, FASTEF,
Université Cheikh Anta Diop de Dakar, Sénégal.}
\and
Ghodratallah Fasihi-Ramandi\thanks{Department of Pure Mathematics, Imam Khomeini International University
Qazvin, Iran }}

\date{}

\begin{document}

\maketitle
\begin{abstract}
We study Killing vector fields, Ricci solitons and hyperbolic Ricci solitons on the hyperbolic strip $S_a=\{(x,y)\in\mathbb{R}^2:0<y<a\}$ endowed with the metric $g_{S_{a}}=\frac{\pi^2}{a^2\sin^2(\pi y/a)}(dx^2+dy^2)$, and on its unit tangent bundle $T^1S_a$ endowed with the Sasaki metric $g^S$. On the strip, we show that the Gauss curvature is constant equal to $-1$, that the Lie algebra of Killing fields is isomorphic to $\mathfrak{sl}(2,\mathbb{R})$, that every Ricci soliton is trivial (Killing potential, $\lambda=-1$), and that a $2$-Killing field which is conformal, or which is the potential of a hyperbolic Ricci soliton with $\lambda\neq0$, is a Killing field. On the unit tangent bundle, we prove that the Lie algebra of Killing fields of $(T^1S_a,g^S)$ is four-dimensional and isomorphic to $\mathfrak{sl}(2,\mathbb{R})\oplus\mathbb{R}$, that $g^S$ is not Einstein but is $\eta$-Einstein with constant coefficients, $\operatorname{Ric}^S=-\tfrac32\,g^S+2\,\eta\otimes\eta$, and that $(T^1S_a,g^S)$ admits no Ricci soliton. For hyperbolic Ricci solitons we prove non-existence for Killing, left-invariant, and $\theta$-invariant potentials with conformal projection, and we reduce the general $\theta$-invariant case to a system of equations on the strip; the classification for an arbitrary potential is left open. Finally, we show that the $\eta$-Ricci solitons of $g^S$ have a Killing potential and $\lambda=\tfrac32$, $\mu=-2$, and that the $\eta$-hyperbolic Ricci solitons obtained in the same classes of potentials are the trivial ones, with a Killing potential, $\mu=-\tfrac32$ and $\nu=2$.

\medskip
\noindent\textbf{Keywords:} Sasaki metric, Killing vector fields,  hyperbolic Ricci solitons, $\eta$-Einstein manifolds, $\eta$-Ricci solitons.

\noindent\textbf{MSC 2020:} 53C21, 53C25, 53C44, 53B21.
\end{abstract}

\section{Introduction}

Ricci solitons are the self-similar solutions of the Ricci flow introduced by Hamilton \cite{Hamilton}. Defined on a Riemannian manifold $(M,g)$ by the existence of a vector field $X$ and a constant $\lambda$ satisfying $\operatorname{Ric}+\tfrac12\mathcal{L}_Xg=\lambda g$, they generalize Einstein metrics (which correspond to a Killing or vanishing potential) and play a central role in the singularity analysis of the flow \cite{Perelman}. Several generalizations have been introduced in the literature: Cho and Kimura introduced $\eta$-Ricci solitons \cite{ChoKimura, Blaga} by adding a term proportional to $\eta\otimes\eta$ for a given $1$-form $\eta$, while the hyperbolic geometric flow of Kong and Liu \cite{KongLiu} led Fasihi-Ramandi, Azami, and Choudhary \cite{FRAC} to define hyperbolic Ricci solitons via the condition:
\begin{equation}\label{eq:HRS}
    \operatorname{Ric}+\lambda\,\mathcal{L}_Xg+\tfrac12\,\mathcal{L}_X(\mathcal{L}_Xg)=\mu\,g .
\end{equation}
A vector field $X$ is called \emph{$2$-Killing} if $\mathcal{L}_X(\mathcal{L}_Xg)=0$ with $\mathcal{L}_{X}g\neq 0$. On an Einstein manifold, every $2$-Killing vector field serves as the potential field for a hyperbolic Ricci soliton with $\lambda=0$.

In this paper, we study these geometric structures on the hyperbolic strip $S_a=\{(x,y)\in\mathbb{R}^2:0<y<a\}$ equipped with the metric $g_{S_{a}}=\frac{\pi^2}{a^2\sin^2(\pi y/a)}(dx^2+dy^2)$ (where $k=\pi/a$), as well as on its unit tangent bundle $T^1S_a\simeq S_a\times\mathbb{S}^1$ endowed with the Sasaki metric $g^S$.

On the strip $S_a$, the Gauss curvature of $g_{S_{a}}$ is constant and equal to $-1$, implying $\operatorname{Ric}=-g_{S_{a}}$ and making $(S_a,g_{S_{a}})$ isometric to the Poincaré half-plane via the transformation $z=e^{k(x+iy)}$. Its Lie algebra of Killing vector fields is three-dimensional, spanned by $\partial_x$ and $\frac{1}{k}e^{\mp kx}\bigl[\cos(ky)\partial_x\mp\sin(ky)\partial_y\bigr]$, and isomorphic to $\mathfrak{sl}(2,\mathbb{R})$. We show that the only Ricci solitons on $S_a$ are trivial, characterized by $\lambda=-1$ and $X$ being a Killing field. Regarding hyperbolic Ricci solitons, a Killing potential yields a solution with $\mu=-1$ and $\lambda$ arbitrary, whereas any conformal potential or $2$-Killing potential (with $\lambda\neq0$) must be Killing. The existence of non-Killing $2$-Killing vector fields on the strip remains an open question, leaving the complete classification open.

On the unit tangent bundle $(T^1S_a,g^S)$, the Lie algebra of Killing vector fields is four-dimensional and isomorphic to $\mathfrak{sl}(2,\mathbb{R})\oplus\mathbb{R}$. The Sasaki metric $g^S$ is not Einstein, but it is $\eta$-Einstein with constant coefficients, satisfying $\operatorname{Ric}^S=-\tfrac32\,g^S+2\,\eta\otimes\eta$ for $\eta=d\theta+\frac{\pi}{a}\cot\bigl(\frac{\pi y}{a}\bigr)dx$ \cite{BoyerGalicki}. We prove that $(T^1S_a,g^S)$ admits no Ricci solitons. For hyperbolic Ricci solitons, we establish non-existence results for Killing, left-invariant, and $\theta$-invariant potentials with conformal projections, and we reduce the general $\theta$-invariant case to a PDE system on $S_a$. The classification for arbitrary potential fields remains open due to the non-linearity in $X$, which prevents the use of standard averaging techniques applicable to Ricci solitons. Finally, the $\eta$-Ricci solitons of $g^S$ possess a Killing potential with parameters $\lambda=\tfrac32$ and $\mu=-2$, while for $\eta$-hyperbolic Ricci solitons, we obtain only trivial solutions (a Killing potential with $\mu=-\tfrac32$, $\nu=2$, and $\lambda$ arbitrary) within the same potential classes.

\section{Basic Results and some recalls}\label{sec:prelim}
We give in this section the basic objects and results that will be used throughout the paper.

\begin{proposition}\label{prop:isometry}
Let $S_a = \{z = x + iy \in \mathbb{C} \mid 0 < y < a\}$ be the horizontal strip of width $a$,
and let $\mathbb{H} = \{w = u + iv \in \mathbb{C} \mid v > 0\}$ be the upper half-plane equipped
with the Poincaré hyperbolic metric $g_{\mathbb{H}} = \frac{du^2 + dv^2}{v^2}$.
The map
\[
F : S_a \longrightarrow \mathbb{H}, \qquad F(z) = \exp\!\Bigl(\frac{\pi z}{a}\Bigr),
\]
is a biholomorphic diffeomorphism, and the pullback of $g_{\mathbb{H}}$ under $F$ is
\[
F^* g_{\mathbb{H}} = \frac{\pi^2}{a^2 \sin^2\!\bigl(\frac{\pi y}{a}\bigr)}(dx^2 + dy^2).
\]
In particular, $(S_a, g)$ is isometric to $(\mathbb{H}, g_{\mathbb{H}})$ and has constant
Gaussian curvature $-1$.
\end{proposition}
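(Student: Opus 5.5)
The plan is to verify the three claims in turn: $F$ is a biholomorphism onto $\mathbb{H}$, the pullback metric has the stated form, and the curvature is constant.

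\emph{Bijectivity.} Write $k=\pi/a$. Then
\[
F(x+iy)=e^{kx}e^{iky}=e^{kx}\cos(ky)+i\,e^{kx}\sin(ky).
\]
Since $0<ky<\pi$, we have $v=e^{kx}\sin(ky)>0$, so $F$ maps $S_a$ into $\mathbb{H}$. It is holomorphic with derivative $F'(z)=k\,e^{kz}\neq0$. For the inverse, every $w\in\mathbb{H}$ has a unique polar form $w=re^{i\phi}$ with $r>0$ and $0<\phi<\pi$. The principal logarithm gives the holomorphic inverse
\[
F^{-1}(w)=\tfrac1k\,\mathrm{Log}\,w=\tfrac1k(\ln r+i\phi),
\]
which lands in $S_a$ because $0<\phi/k<a$. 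Hence $F$ is a biholomorphic diffeomorphism.

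\emph{Pullback metric.} Since $F$ is conformal,
\[
F^*(du^2+dv^2)=|F'(z)|^2(dx^2+dy^2)=k^2e^{2kx}(dx^2+dy^2).
\]
Dividing by $v^2=e^{2kx}\sin^2(ky)$ cancels the factor $e^{2kx}$ and gives
\[
F^*g_{\mathbb{H}}=\frac{k^2}{\sin^2(ky)}(dx^2+dy^2).
\]
This is the stated formula, since $k^2=\pi^2/a^2$. By construction $F$ is then an isometry from $(S_a,g)$ onto $(\mathbb{H},g_{\mathbb{H}})$.

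\emph{Curvature.} Curvature is invariant under isometries, and $g_{\mathbb{H}}$ has curvature $-1$. As an independent check, use the conformal formula $K=-e^{-2\varphi}\Delta\varphi$ for $g=e^{2\varphi}(dx^2+dy^2)$. Here $\varphi=\ln k-\ln\sin(ky)$, so
\[
\varphi_{yy}=\frac{k^2}{\sin^2(ky)}=e^{2\varphi},
\]
and therefore $K=-1$.

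No step is a real obstacle. The only point needing care is surjectivity onto $\mathbb{H}$, which uses the branch of the argument in $(0,\pi)$.
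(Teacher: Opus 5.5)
Your proposal is correct and follows essentially the same route as the paper: you show $F$ maps into $\mathbb{H}$ with holomorphic inverse given by the logarithm with argument in $(0,\pi)$, then write $F^*(du^2+dv^2)=|F'(z)|^2(dx^2+dy^2)$ and cancel the $e^{2kx}$ factor against $v^2$. Your additional curvature check via $K=-e^{-2\varphi}\Delta\varphi$ is the same computation the paper carries out separately in Theorem~\ref{thm:curvature}.
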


\begin{proof}
Let $z = x + iy \in S_a$. Then
\[
F(z) = e^{\frac{\pi}{a}(x+iy)} = e^{\frac{\pi x}{a}}\Bigl(\cos\frac{\pi y}{a} + i\sin\frac{\pi y}{a}\Bigr).
\]
Writing $F(z) = u + iv$, we obtain $u = e^{\frac{\pi x}{a}}\cos\frac{\pi y}{a}$ and
$v = e^{\frac{\pi x}{a}}\sin\frac{\pi y}{a}$.
Since $0 < y < a$ implies $0 < \frac{\pi y}{a} < \pi$, we have $\sin\frac{\pi y}{a} > 0$,
so $v > 0$ and $F(S_a) \subset \mathbb{H}$.
The inverse map is $F^{-1}(w) = \frac{a}{\pi}\mathrm{Log}(w)$, where $\mathrm{Log}$ is the
branch of the logarithm with argument in $(0,\pi)$.
Since $F'(z) = \frac{\pi}{a}e^{\frac{\pi z}{a}}$, we have $|F'(z)|^2 = \frac{\pi^2}{a^2}e^{\frac{2\pi x}{a}}$.
As $F$ is conformal, $du^2 + dv^2 = |F'(z)|^2(dx^2 + dy^2)$.
Moreover, $v^2 = e^{\frac{2\pi x}{a}}\sin^2\frac{\pi y}{a}$.
Substituting into $g_{\mathbb{H}} = \frac{du^2 + dv^2}{v^2}$, the exponential factors cancel and
yield the stated formula.
\end{proof}

\begin{theorem}\label{thm:curvature}
The Gaussian curvature of $(S_a, g)$ is constant, $K \equiv -1$.
Consequently, $\operatorname{Ric} = -g$.
\end{theorem}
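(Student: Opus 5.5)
The plan is to prove $K\equiv -1$ in two independent ways and then derive $\operatorname{Ric}=-g$ from the two-dimensional identity. The first route is immediate. By Proposition~\ref{prop:isometry}, $F$ is an isometry from $(S_a,g)$ onto $(\mathbb{H},g_{\mathbb{H}})$. Gaussian curvature is an isometry invariant, and the Poincaré half-plane has curvature $-1$, so $K\equiv-1$.

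To keep the argument self-contained, I will also verify this directly with the formula for a conformal metric. Write $g=e^{2f}(dx^2+dy^2)$ with $k=\pi/a$. Then
\[
f=\log k-\log\sin(ky),
\]
and for such a metric
\[
K=-e^{-2f}\Delta_0 f,
\]
where $\Delta_0=\partial_x^2+\partial_y^2$ is the flat Laplacian. The function $f$ depends only on $y$. We have
\[
f_y=-k\cot(ky), \qquad f_{yy}=\frac{k^2}{\sin^2(ky)}.
\]
Since $e^{-2f}=\sin^2(ky)/k^2$, this gives
\[
K=-\frac{\sin^2(ky)}{k^2}\cdot\frac{k^2}{\sin^2(ky)}=-1.
\]

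For the second claim, recall that on any Riemannian surface the Ricci tensor is $\operatorname{Ric}=K\,g$. The reason is that for an orthonormal frame $e_1,e_2$,
\[
\operatorname{Ric}(e_1,e_1)=\langle R(e_2,e_1)e_1,e_2\rangle=K,
\]
and likewise $\operatorname{Ric}(e_2,e_2)=K$. The off-diagonal term $\operatorname{Ric}(e_1,e_2)$ vanishes by the symmetries of the curvature tensor in dimension two. With $K\equiv-1$ this gives $\operatorname{Ric}=-g$.

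The only step needing care is the sign convention in the conformal curvature formula. I will check it against the half-plane metric $v^{-2}(du^2+dv^2)$: here $f=-\log v$, so $\Delta_0 f=v^{-2}$ and the formula gives $K=-v^2\cdot v^{-2}=-1$, which confirms the sign.
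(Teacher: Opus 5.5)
Your proof is correct, and your direct computation is exactly the paper's: it writes $g=e^{2\varphi}(dx^2+dy^2)$ with $\varphi=\ln k-\ln\sin(ky)$, applies $K=-e^{-2\varphi}\Delta\varphi$ to get $K=-1$, and then uses $\operatorname{Ric}=K\,g$ in dimension two. Your isometry argument is also valid, since Proposition~\ref{prop:isometry} already records curvature $-1$ as a consequence of the isometry with $\mathbb{H}$, and your sign check against the half-plane is a sensible extra safeguard.
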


\begin{proof}
For a conformal metric $g = e^{2\varphi}(dx^2 + dy^2)$ in dimension two with
$\varphi = \varphi(y)$ (so $\varphi_x = 0$), the non-vanishing Christoffel symbols are
\[
\Gamma^1_{12} = \Gamma^1_{21} = \varphi', \quad
\Gamma^2_{11} = -\varphi', \quad
\Gamma^2_{22} = \varphi'.
\]
The only independent component of the Riemann tensor is
$R^1{}_{212} = \partial_y \Gamma^1_{12} - \partial_x \Gamma^1_{22}
+ \Gamma^1_{2m}\Gamma^m_{12} - \Gamma^1_{1m}\Gamma^m_{22}$.
A direct computation gives $R^1{}_{212} = \varphi'' + (\varphi')^2 - (\varphi')^2 = \varphi''$,
hence $R_{1212} = e^{2\varphi}\varphi''$.
The Gaussian curvature formula in isothermal coordinates reads $K = -e^{-2\varphi}\Delta\varphi$
with $\Delta\varphi = \varphi''(y)$.

For this metric $g_{S_{a}}=\frac{\pi^2}{a^2\sin^2(\pi y/a)}(dx^2+dy^2)$, $\varphi(y) = \ln k - \ln\sin(ky)$, so
$\varphi'(y) = -k\cot(ky)$ and $\varphi''(y) = k^2\csc^2(ky)$.
Since $e^{-2\varphi} = \frac{\sin^2(ky)}{k^2}$,
\[
K = -\frac{\sin^2(ky)}{k^2} \cdot k^2\csc^2(ky) = -1.
\]
In any 2-dimensional Riemannian manifold, $\operatorname{Ric}_{ij} = K\, g_{ij}$, so
$\operatorname{Ric} = -g$.
\end{proof}

\begin{definition}
A vector field $X$ on a Riemannian manifold $(M,g)$ is a \emph{Killing vector field} if
$\mathcal{L}_X g = 0$, i.e., the local flows generated by $X$ act by isometries.
\end{definition}

\begin{theorem}[{\cite[Thm. 35]{Petersen}}]\label{thm:killing_bound}
The Lie algebra $\mathrm{iso}(M,g)$ of Killing vector fields has dimension at most
$\frac{n(n+1)}{2}$. If $M$ is complete, then $\mathrm{iso}(M,g)$ is the Lie algebra of
$\mathrm{Isom}(M,g)$.
\end{theorem}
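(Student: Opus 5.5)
The plan is the classical argument: a Killing field is determined by its $1$-jet at one point, and that $1$-jet lives in a space of dimension $n+\frac{n(n-1)}{2}=\frac{n(n+1)}{2}$. Fix a point $p\in M$ and assume $M$ is connected. Consider the linear map
\[
\Phi:\mathrm{iso}(M,g)\to T_pM\oplus\mathfrak{so}(T_pM),\qquad X\mapsto \bigl(X_p,(\nabla X)_p\bigr).
\]
This is well defined because the Killing equation $g(\nabla_YX,Z)+g(\nabla_ZX,Y)=0$ says exactly that $\nabla X$ is skew-adjoint. The target has dimension $n+\frac{n(n-1)}{2}=\frac{n(n+1)}{2}$, so the bound follows once $\Phi$ is shown to be injective.

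For injectivity I will use the second-order identity $\nabla^2_{Y,Z}X=-R(X,Y)Z$, valid for Killing $X$. It is obtained by writing the Killing equation for $\nabla X$, cyclically permuting, and applying the first Bianchi identity. Consequently, along any geodesic $\gamma$ starting at $p$, the pair $(X,\nabla X)$ satisfies a linear first-order ODE:
\[
\nabla_{\dot\gamma}X=(\nabla X)(\dot\gamma),\qquad \nabla_{\dot\gamma}(\nabla X)=-R(X,\dot\gamma)\,\cdot .
\]
If $\Phi(X)=0$, uniqueness for this linear ODE gives $X\equiv0$ along every geodesic from $p$, hence on a normal neighbourhood of $p$. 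The set where $X$ and $\nabla X$ both vanish is closed. By the same argument it is also open, so connectedness gives $X\equiv 0$. The main technical point is the derivation of the curvature identity; I would carry it out by the standard permutation trick.

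For the second statement, assume $M$ is complete and let $X$ be a Killing field. Completeness of $M$ implies that $X$ is a complete vector field. The reason is that an integral curve has constant speed $|X|$ along it, since Killing flows preserve $|X|$. Using the local flow defined for a uniform time $\varepsilon$ near the compact closed ball of the Hopf--Rinow theorem, one extends integral curves indefinitely. The flow of $X$ is then a one-parameter group of isometries, so $X$ lies in the Lie algebra of $\mathrm{Isom}(M,g)$.

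Conversely, by the Myers--Steenrod theorem $\mathrm{Isom}(M,g)$ is a Lie group acting smoothly. Differentiating a one-parameter subgroup of it at $t=0$ therefore gives a Killing field. These two identifications are inverse to each other, which completes the correspondence. The step I expect to be hardest is this Myers--Steenrod input, which I would simply cite.
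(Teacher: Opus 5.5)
The paper gives no proof of this statement---it is simply quoted from Petersen---and your argument is the standard textbook one: injectivity of $X\mapsto(X_p,(\nabla X)_p)$ via the identity $\nabla^2_{Y,Z}X=-R(X,Y)Z$, then completeness of Killing fields on complete $M$ combined with Myers--Steenrod, and it is correct as outlined. Your added connectedness assumption is genuinely needed for the dimension bound, and the details you leave implicit are routine: continuity of the flow $t\mapsto\varphi_t$ into $\mathrm{Isom}(M,g)$ for the compact-open topology, automatic smoothness of continuous homomorphisms $\mathbb{R}\to\mathrm{Isom}(M,g)$, and the sign that makes the correspondence a Lie algebra anti-isomorphism under the usual conventions.
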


\begin{definition}[{\cite{Nemeth}}]
A vector field $X$ on $(M,g)$ is called a \emph{2-Killing vector field} if
$\mathcal{L}_X g \neq 0$ and $\mathcal{L}_X(\mathcal{L}_X g) = 0$.
\end{definition}

\begin{definition}
A triple $(g, X, \lambda)$ is a \emph{Ricci soliton} on $(M,g)$ if
\[
\operatorname{Ric} + \frac{1}{2}\mathcal{L}_X g = \lambda\, g.
\]
It is \emph{trivial} if $X$ is Killing.
\end{definition}

\begin{definition}
A quadruple $(g, X, \lambda, \mu)$ is a \emph{hyperbolic Ricci soliton} on $(M,g)$ if
Equation \eqref{eq:HRS} holds.
It is called a \emph{gradient} hyperbolic Ricci soliton if $X = \nabla f$ for some smooth
function$f$.
\end{definition}
\begin{definition}[$\eta$-Einstein manifold]\label{def:etaEinstein}
Let $(M,g)$ be a Riemannian manifold and $\eta$ a $1$-form on $M$. We say that $(M,g,\eta)$ is \emph{$\eta$-Einstein} if there exist smooth functions $a,b\in C^\infty(M)$ such that
\begin{equation}\label{etaEinstein}
\operatorname{Ric}=a\,g+b\,\eta\otimes\eta .
\end{equation}
If $a$ and $b$ are constants, $(M,g,\eta)$ is called an \emph{$\eta$-Einstein manifold with constant coefficients}. For $b=0$ one recovers the Einstein condition.
\end{definition}


\begin{definition}[$\eta$-Ricci soliton]\label{def:etaRS}
Let $(M,g)$ be a Riemannian manifold and $\eta$ a $1$-form on $M$. A quadruple $(g,X,\lambda,\mu)$, where $X$ is a smooth vector field and $\lambda,\mu\in\mathbb{R}$, is called an \emph{$\eta$-Ricci soliton} \cite{ChoKimura} if
\begin{equation}\label{etaRS}
\mathcal{L}_Xg+2\operatorname{Ric}+2\lambda\, g+2\mu\,\eta\otimes\eta=0 .
\end{equation}
It is called shrinking, steady or expanding according to whether $\lambda<0$, $\lambda=0$ or $\lambda>0$. For $\mu=0$ one recovers the classical Ricci solitons.
\end{definition}
\section{Main Results}\label{sec:main}
We now establish the main results of the paper. We begin with the Killing vector fields of the strip $S_a$, which form the backbone of all subsequent constructions, and then turn to the Ricci solitons and the hyperbolic Ricci solitons of the strip. Next, we study the Sasaki metric on the unit tangent bundle $T^1S_a$: we determine its Killing vector fields, show that it is not Einstein, and prove that it admits no Ricci soliton. For hyperbolic Ricci solitons, we prove non-existence only for particular classes of potentials, namely Killing fields, left-invariant fields, and $\theta$-invariant fields whose projection to $S_a$ is conformal. For $\theta$-invariant potentials in general, we reduce the problem to a system of equations on the strip. The classification for an arbitrary potential is left open, because the equation is nonlinear in the potential and the averaging argument used for Ricci solitons no longer applies. Finally, we prove that the Sasaki metric is $\eta$-Einstein, and we determine its $\eta$-Ricci solitons. For $\eta$-hyperbolic Ricci solitons, we obtain only the trivial solutions given by Killing potentials in the same classes of potentials, and the general case is likewise left open.
\subsection{Killing vector fields of $(S_a, g)$}
We determine in this subsection the complete Lie algebra of Killing vector fields of the strip,
showing that it attains the maximal dimension allowed by the constant curvature of the metric.
\begin{theorem}\label{thm:killing}
The Lie algebra of Killing vector fields on the hyperbolic strip $(S_a, g_{S_{a}})$ is three-dimensional and isomorphic to $\mathfrak{sl}(2,\mathbb{R})$. A global basis is given by $\{K_1, K_2, K_3\}$, where
\begin{align*}
K_1 &= \partial_x,\\
K_2 &= e^{kx}\cos(ky)\,\partial_x + e^{kx}\sin(ky)\,\partial_y,\\
K_3 &= e^{-kx}\cos(ky)\,\partial_x - e^{-kx}\sin(ky)\,\partial_y,
\end{align*}
with $k = \pi/a$. Consequently, any Killing vector field $X$ on $S_a$ uniquely decomposes as
\begin{equation}\label{eq:killing_general}
X = c_1 K_1 + c_2 K_2 + c_3 K_3, \qquad c_1, c_2, c_3 \in \mathbb{R}.
\end{equation}
\end{theorem}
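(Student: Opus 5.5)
The plan has three parts: first verify that $K_1,K_2,K_3$ are Killing, then bound the dimension of the Killing algebra using Theorem~\ref{thm:killing_bound}, and finally identify the bracket structure.

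\textbf{Killing property.} Write $g=e^{2\varphi(y)}(dx^2+dy^2)$ with $\varphi=\ln k-\ln\sin(ky)$, as in the proof of Theorem~\ref{thm:curvature}. For $X=A\partial_x+B\partial_y$, the Killing equation $\mathcal{L}_Xg=0$ splits into two parts:
\[
A_x-B_y=0,\qquad A_y+B_x=0,\qquad A_x+\varphi'(y)B=0 .
\]
The first two equations say that $A-iB$ is holomorphic in $z=x+iy$. The third says that the conformal factor is preserved.

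For $K_1$ we have $A=1$ and $B=0$, so all three equations hold trivially.

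For $K_2$ we have $A-iB=e^{kx}e^{-iky}=e^{k\bar z}$. This is the wrong holomorphicity, so I will recheck the sign convention. A cleaner route is to push the fields forward through the isometry $F$ of Proposition~\ref{prop:isometry}. On $\mathbb{H}$ the Killing fields are the generators of $\mathrm{PSL}(2,\mathbb{R})$: $\partial_u$ (translations), $u\partial_u+v\partial_v$ (dilations), and $(u^2-v^2)\partial_u+2uv\partial_v$ (special conformal maps). Pulling these back by $F$ should give the following. The dilation becomes $\frac1k\partial_x$. The translation $\partial_w$ pulls back to $\frac{1}{F'(z)}\partial_z=\frac1k e^{-kz}\partial_z$, which in real form is $\frac1k e^{-kx}(\cos(ky)\partial_x-\sin(ky)\partial_y)$, i.e. $K_3$ up to a constant. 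The special conformal field $w^2\partial_w$ pulls back to $\frac1k e^{kz}\partial_z$, i.e. $\frac1k e^{kx}(\cos(ky)\partial_x+\sin(ky)\partial_y)$, which is $K_2$ up to a constant. Since pullbacks of Killing fields under an isometry are Killing, this proves $K_1,K_2,K_3$ are Killing without any separate check of the factor equation. As a sanity check I will still verify $A_x+\varphi'B=0$ for $K_2$ directly: $A_x=ke^{kx}\cos(ky)$ and $\varphi'B=-k\cot(ky)\,e^{kx}\sin(ky)=-ke^{kx}\cos(ky)$, which cancel.

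\textbf{Dimension and completeness.} The three fields are linearly independent over $\mathbb{R}$, since their coefficients $1,e^{kx},e^{-kx}$ are independent functions. By Theorem~\ref{thm:killing_bound} with $n=2$, the dimension of the Killing algebra is at most $3$. Hence $\{K_1,K_2,K_3\}$ is a basis, and the decomposition \eqref{eq:killing_general} with unique coefficients follows. The key point is that the bound in Theorem~\ref{thm:killing_bound} holds for any connected Riemannian manifold, because a Killing field is determined by its value and covariant derivative at one point; completeness of the strip, which is isometric to $\mathbb{H}$, is not needed for this.

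\textbf{Lie algebra structure.} I will compute the brackets in the complex form, using $[f\partial_z,h\partial_z]=(fh'-hf')\partial_z$ for holomorphic coefficients, which is compatible with taking real parts. Writing $K_1=\partial_z$, $K_2=e^{kz}\partial_z$, $K_3=e^{-kz}\partial_z$ (as real fields), this gives
\[
[K_1,K_2]=kK_2,\qquad [K_1,K_3]=-kK_3,\qquad [K_2,K_3]=-2kK_1 .
\]
Setting $H=\frac2kK_1$, $E=\frac1kK_2$ and $F'=-\frac1kK_3$ should then give $[H,E]=2E$, $[H,F']=-2F'$ and $[E,F']=H$, which are the $\mathfrak{sl}(2,\mathbb{R})$ relations.

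The main obstacle is bookkeeping. One must get the orientation and conjugation conventions right, namely $\partial_z$ versus $\partial_{\bar z}$ and the sign on $\partial_y$, so that the stated real fields match holomorphic ones. I will resolve this by the direct factor-equation check above together with the two Cauchy–Riemann equations, adjusting the normalization constants in the final basis if necessary.
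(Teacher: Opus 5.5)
Your argument is correct, but it takes a different route from the paper's proof.

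\textbf{The paper's route.} The paper proves completeness by integrating the Killing system (I)--(III) directly. It solves (II) to get $X^2=C(x)\sin(ky)/k$ and integrates (I) in $x$. Separating variables in (III) then gives $\widetilde C''-k^2\widetilde C=kc_0$ and $D'=-c_0\sin(ky)$. The surviving constants $d_0,A,B$ produce exactly $d_0K_1+AK_2+BK_3$.

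\textbf{Your route.} You obtain the three fields as pullbacks under $F$ of the (real parts of the) standard $\mathrm{PSL}(2,\mathbb{R})$ generators. These are $F^*(\partial_w)=\tfrac1kK_3$, $F^*(w\partial_w)=\tfrac1kK_1$ and $F^*(w^2\partial_w)=\tfrac1kK_2$. Completeness then follows from the bound $\dim\le n(n+1)/2=3$, which, as you note, needs only connectedness.

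\textbf{What each buys.} The paper's route is self-contained and uses neither the dimension bound nor prior knowledge of the Killing fields of $\mathbb{H}$. Yours avoids solving PDEs and explains where the fields come from. It also actually verifies the isomorphism with $\mathfrak{sl}(2,\mathbb{R})$, which the paper's proof only asserts. Your brackets $[K_1,K_2]=kK_2$, $[K_1,K_3]=-kK_3$, $[K_2,K_3]=-2kK_1$ are correct, and so is your normalization $H,E,F'$.

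\textbf{A sign slip.} The equations $A_x=B_y$, $A_y=-B_x$ say that $A+iB$ is holomorphic, not $A-iB$. This is because $A\partial_x+B\partial_y=2\operatorname{Re}\bigl((A+iB)\partial_z\bigr)$. With this convention, $K_2$ corresponds to $e^{kz}\partial_z$ and $K_3$ to $e^{-kz}\partial_z$, exactly as you later use in the bracket computation. So the stated fields satisfy the Cauchy--Riemann part directly: your ``wrong holomorphicity'' alarm was a false one, and no renormalization of the basis is needed.
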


\begin{proof}
A vector field $X = X^1\partial_x + X^2\partial_y$ satisfies $\mathcal{L}_X g = 0$ if and only if
\begin{align}
X^1_x + \varphi' X^2 &= 0, \tag{I}\\
X^2_y + \varphi' X^2 &= 0, \tag{II}\\
X^2_x + X^1_y &= 0. \tag{III}
\end{align}

\textit{Solving (II).}
For fixed $x$, equation (II) is the ODE $\partial_y X^2 / X^2 = -\varphi'(y)$, giving
$X^2(x,y) = C(x)\,e^{-\varphi(y)} = C(x)\sin(ky)/k$
for an arbitrary function $C(x)$.

\textit{Solving (I).}
Since $\varphi' = -k\cot(ky)$, equation (I) gives
$X^1_x = k\cot(ky)\cdot C(x)\sin(ky)/k = C(x)\cos(ky)$.
Integrating in $x$:
$X^1(x,y) = \widetilde{C}(x)\cos(ky) + D(y)$,
where $\widetilde{C}(x) = \int C(x)\,dx$.

\textit{Imposing (III).}
From the expressions above,
\[
\sin(ky)\!\left[\frac{\widetilde{C}''(x)}{k} - k\widetilde{C}(x)\right] + D'(y) = 0
\quad \text{for all } x,y.
\]
Since $\sin(ky)$ is not a constant function of $y$, consistency forces the bracket to be a constant
$c_0$, leading to
\[
\widetilde{C}''(x) - k^2\widetilde{C}(x) = kc_0, \qquad D'(y) = -c_0\sin(ky).
\]
The general solution is $\widetilde{C}(x) = Ae^{kx} + Be^{-kx} - c_0/k$
and $D(y) = (c_0/k)\cos(ky) + d_0$.
Substituting back, the $c_0$-terms cancel exactly and yield the expressions in \eqref{eq:killing_general}.
The three constants $d_0, A, B$ are free and independent, so $\dim(\mathrm{iso}(S_a,g)) = 3$,
which is the maximal dimension for a 2-dimensional manifold, consistent with $(S_a,g)$ having
constant curvature.
\end{proof}

\begin{remark}
The existence of non-trivial Killing fields shows that $(S_a,g)$ is not compact
(Bochner's theorem, 1946; see \cite[Thm. 36]{Petersen}).
\end{remark}

\subsection{2-Killing vector fields of $(S_a, g)$}

We construct in this subsection an example of explicit family of non-trivial 2-Killing vector fields on the strip,
obtained by pulling back a 2-Killing vector field on the Poincaré half-plane via the isometry of Proposition \ref{prop:isometry}.

\begin{theorem}\label{thm:2killing}
For constants $A, B \in \mathbb{R}$ with $A \neq 0$, the vector field
\[
X_{A,B} = \frac{e^{-kx}}{k}\bigl(Ae^{kx}\cos(ky) + B\bigr)^{1/3}
\Bigl[\cos(ky)\,\partial_x - \sin(ky)\,\partial_y\Bigr]
\]
is a 2-Killing vector field on every open domain of $S_a$ on which $Ae^{kx}\cos(ky) + B > 0$.
\end{theorem}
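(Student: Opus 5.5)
The plan is to transport the problem to the Poincaré half-plane through the isometry $F$ of Proposition \ref{prop:isometry}. Since Lie derivatives commute with diffeomorphisms ($F_*(\mathcal{L}_X g)=\mathcal{L}_{F_*X}(F_*g)$), $X$ is 2-Killing for $g_{S_a}$ exactly when $F_*X$ is 2-Killing for $g_{\mathbb H}$. The expectation is that $F_*X_{A,B}$ becomes a very simple field on $\mathbb H$.

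\textbf{Step 1: identify $F_*X_{A,B}$.} I will view a vector field $X^1\partial_x+X^2\partial_y$ as the complex function $X^1+iX^2$. The push-forward under the holomorphic map $F$ is multiplication by $F'(z)=k\,e^{kz}$.
\begin{itemize}[leftmargin=2em]
\item The bracket $\cos(ky)\partial_x-\sin(ky)\partial_y$, multiplied by $e^{-kx}$, corresponds to $e^{-kz}$. So $e^{-kx}\bigl[\cos(ky)\partial_x-\sin(ky)\partial_y\bigr]=K_3$ is sent to the constant $k$, that is, $F_*K_3=k\,\partial_u$.
\item By the formulas in the proof of Proposition \ref{prop:isometry}, $u=e^{kx}\cos(ky)$, hence $Ae^{kx}\cos(ky)+B=Au+B$.
\end{itemize}
Together these give
\[
F_*X_{A,B}=(Au+B)^{1/3}\,\partial_u=:f(u)\,\partial_u ,
\]
defined on the image of the domain where $Au+B>0$.

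\textbf{Step 2: compute the first Lie derivative on $\mathbb H$.} Take $Y=f(u)\partial_u$. Then $\mathcal{L}_Y u=f$ and $\mathcal{L}_Y v=0$, so $\mathcal{L}_Y du=f'\,du$ and $\mathcal{L}_Y dv=0$. Applying the Leibniz rule to $g_{\mathbb H}=v^{-2}(du^2+dv^2)$ gives
\[
\mathcal{L}_Y g_{\mathbb H}=\frac{2f'}{v^2}\,du^2 .
\]
This is nonzero because $f'=\tfrac{A}{3}(Au+B)^{-2/3}\neq0$ when $A\neq0$, which settles the condition $\mathcal{L}_Xg\neq0$.

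\textbf{Step 3: compute the second Lie derivative.} Applying $\mathcal{L}_Y$ once more, using $Y(f')=ff''$ and $Y(v^{-2})=0$, gives
\[
\mathcal{L}_Y\mathcal{L}_Y g_{\mathbb H}=\frac{2\,(ff''+2f'^2)}{v^2}\,du^2
=\frac{2}{3f}\,(f^3)''\,\frac{du^2}{v^2}.
\]
Since $f^3=Au+B$ is affine in $u$, this vanishes identically. Pulling back by $F$ concludes the proof.

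The only step needing care is Step 1, namely getting the push-forward right (the sign in front of $\sin(ky)$ and the factor $1/k$). I will cross-check it by applying $dF$ directly to $K_3$ in real coordinates. I will also restrict attention to the domain where $Au+B>0$, so that $f$ is smooth and $f\neq0$ there.
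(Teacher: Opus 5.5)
Your proposal is correct and follows essentially the paper's route: transport to $\mathbb{H}$ via the isometry $F$, compute $\mathcal{L}_Y g_{\mathbb{H}}=\frac{2f'}{v^2}\,du^2$ and $\mathcal{L}_Y\mathcal{L}_Y g_{\mathbb{H}}=\frac{2}{v^2}\bigl(ff''+2f'^2\bigr)du^2$ for $Y=f(u)\partial_u$, then pull back. The differences are cosmetic: you verify $f=(Au+B)^{1/3}$ directly through $ff''+2f'^2=\frac{1}{3f}(f^3)''$ instead of deriving it by integrating the ODE, and you identify $F_*X_{A,B}$ via multiplication by $F'(z)$ rather than by writing $\partial_u$ in $(x,y)$-coordinates.
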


\begin{proof}
Consider the isometry $F : S_a \to \mathbb{H}$ of Proposition \ref{prop:isometry},
$F(x,y) = (e^{kx}\cos(ky),\, e^{kx}\sin(ky))$,
with $u = e^{kx}\cos(ky)$ and $v = e^{kx}\sin(ky)$.

On $(\mathbb{H}, g_{\mathbb{H}})$, consider the vector field $\widetilde{X} = q(u)\,\partial_u$.
Since $[{\widetilde{X}}, \partial_u] = -q'(u)\partial_u$ and $[\widetilde{X}, \partial_v] = 0$,
a direct computation gives
\[
\mathcal{L}_{\widetilde{X}} g_{\mathbb{H}} = \frac{2q'(u)}{v^2}\,du^2.
\]
The second Lie derivative is
\[
\mathcal{L}_{\widetilde{X}}(\mathcal{L}_{\widetilde{X}} g_{\mathbb{H}}) =
\frac{2}{v^2}\bigl(q\,q'' + 2(q')^2\bigr)\,du^2.
\]
This vanishes if and only if
\begin{equation}\label{eq:2killing_ode}
q\,q'' + 2(q')^2 = 0.
\end{equation}
Setting $p = q'$ and writing $q'' = p\,dp/dq$, equation \eqref{eq:2killing_ode} becomes
$q\,p\,dp/dq + 2p^2 = 0$.
For $p \neq 0$, dividing by $p$ gives $q\,dp/dq + 2p = 0$, i.e., $dp/p = -2\,dq/q$.
Integrating yields $q' = C\,q^{-2}$, and hence $d(q^3/3)/du = C$, so
$q(u) = (Au + B)^{1/3}$ with $A, B \in \mathbb{R}$.

Thus $\widetilde{X}_{A,B} = (Au + B)^{1/3}\partial_u$ is a non-trivial 2-Killing vector field on
$\mathbb{H}$ whenever $A \neq 0$.
To express it in $(x,y)$-coordinates, we compute
\[
\frac{\partial}{\partial u} = \frac{e^{-kx}}{k}
\Bigl[\cos(ky)\,\partial_x - \sin(ky)\,\partial_y\Bigr],
\]
using $u^2 + v^2 = e^{2kx}$.
Substituting $u = e^{kx}\cos(ky)$ into $\widetilde{X}_{A,B}$ and applying the pullback by $F$
gives the stated $X_{A,B}$.
Since $F$ is an isometry, Lie derivatives are preserved under pullback:
$\mathcal{L}_{X_{A,B}} g = F^*(\mathcal{L}_{\widetilde{X}_{A,B}} g_{\mathbb{H}}) \neq 0$
and
$\mathcal{L}_{X_{A,B}}(\mathcal{L}_{X_{A,B}} g) = 0$.
\end{proof}

\subsection{Classical Ricci Solitons on $(S_a, g)$}
We show in this subsection that every Ricci soliton structure on the strip is trivial,
with the soliton constant uniquely determined by the curvature of the metric.

\begin{theorem}\label{thm:soliton1}
On $(S_a, g)$, for every Killing field $X$ of the form \eqref{eq:killing_general},
the pair $(X, \lambda = -1)$ satisfies the Ricci soliton equation.
No other value of $\lambda$ is compatible with a Killing $X$.
\end{theorem}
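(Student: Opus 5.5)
The argument is a direct substitution into the Ricci soliton equation, using two earlier results: Theorem \ref{thm:curvature}, which gives $\operatorname{Ric}=-g$, and Theorem \ref{thm:killing}, which gives the form \eqref{eq:killing_general} of the Killing fields.

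Let $X=c_1K_1+c_2K_2+c_3K_3$ be any Killing field of the form \eqref{eq:killing_general}. The map $X\mapsto\mathcal{L}_Xg$ is linear, and each $K_i$ is Killing by Theorem \ref{thm:killing}. Hence $\mathcal{L}_Xg=0$, and the equation $\operatorname{Ric}+\frac12\mathcal{L}_Xg=\lambda g$ reduces to
\[
\operatorname{Ric}=\lambda g.
\]
By Theorem \ref{thm:curvature} the left-hand side is $-g$, so the equation is satisfied exactly when $(\lambda+1)\,g=0$.

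For the uniqueness statement, note that $g$ is a Riemannian metric and is nowhere zero. For instance, $g(\partial_x,\partial_x)=\pi^2/(a^2\sin^2(\pi y/a))>0$ at every point. Evaluating $(\lambda+1)g=0$ on $(\partial_x,\partial_x)$ therefore forces $\lambda=-1$. Conversely, $\lambda=-1$ clearly satisfies the equation, which proves both claims.

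There is no real obstacle in this argument. The only care needed is that Killing fields not of the form \eqref{eq:killing_general} do not occur, and this is exactly what Theorem \ref{thm:killing} guarantees. The argument also covers the zero field, which is the case $c_1=c_2=c_3=0$.
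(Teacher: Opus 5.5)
Your proof is correct and matches the paper's: $\mathcal{L}_Xg=0$ reduces the soliton equation to $\operatorname{Ric}=\lambda g$, and $\operatorname{Ric}=-g$ from Theorem \ref{thm:curvature} then forces $\lambda=-1$. Your closing caveat is not needed, since the argument uses only $\mathcal{L}_Xg=0$ and so works for any Killing field, whatever the classification in Theorem \ref{thm:killing} says.
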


\begin{proof}
By Theorem \ref{thm:killing}, $\mathcal{L}_X g = 0$ for any Killing field $X$.
By Theorem \ref{thm:curvature}, $\operatorname{Ric} = -g$.
Substituting into $\operatorname{Ric} + \frac{1}{2}\mathcal{L}_X g = \lambda g$ gives $-g = \lambda g$, hence $\lambda = -1$.
\end{proof}

\begin{theorem}\label{thm:soliton2}
Every Ricci soliton on $(S_a, g)$ is trivial ; that is if $(S_a, g, X, \lambda)$ satisfies
$\operatorname{Ric} + \frac{1}{2}\mathcal{L}_X g = \lambda g$, then $\lambda = -1$ and $X$ is Killing.
\end{theorem}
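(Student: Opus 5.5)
By Theorem \ref{thm:curvature} we have $\operatorname{Ric}=-g$. Substituting this into the soliton equation $\operatorname{Ric}+\frac12\mathcal{L}_Xg=\lambda g$ turns it into
\[
\mathcal{L}_X g = 2(\lambda+1)\,g .
\]
So the potential $X$ is a homothetic vector field with \emph{constant} factor $c:=\lambda+1$. The plan is to show that a homothetic field on a manifold with $\operatorname{Ric}=-g$ must have $c=0$. This gives $\lambda=-1$ and $\mathcal{L}_Xg=0$, i.e.\ $X$ is Killing, and the conclusion of Theorem \ref{thm:soliton1} is recovered.

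The key step is the naturality of the Ricci tensor under scaling.

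\begin{enumerate}[label=(\roman*)]
\item The Levi-Civita connection of $e^{2s}g$ coincides with that of $g$ when $s$ is constant. Hence the Ricci tensor of $e^{2s}g$ equals that of $g$.
\item The flow $\phi_t$ of $X$ satisfies $\phi_t^*g=e^{2ct}g$ wherever it is defined. This follows by integrating $\frac{d}{dt}\phi_t^*g=\phi_t^*\mathcal{L}_Xg=2c\,\phi_t^*g$.
\item By naturality, $\phi_t^*\operatorname{Ric}(g)=\operatorname{Ric}(\phi_t^*g)=\operatorname{Ric}(e^{2ct}g)=\operatorname{Ric}(g)$. Differentiating at $t=0$ gives $\mathcal{L}_X\operatorname{Ric}=0$.
\end{enumerate}

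Working with the local flow suffices, since only the derivative at $t=0$ is used, so completeness of $X$ is never needed. On the other hand, $\operatorname{Ric}=-g$ gives
\[
\mathcal{L}_X\operatorname{Ric}=-\mathcal{L}_Xg=-2c\,g .
\]
Comparing the two expressions yields $c\,g=0$, hence $c=0$.

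The only delicate point is justifying $\mathcal{L}_X\operatorname{Ric}=0$ for a homothetic field. I would handle it either by the flow argument above or directly. For the direct route, use the identity $(\mathcal{L}_X\nabla)(Y,Z)=\frac12\bigl[\nabla_Y h(Z,\cdot)+\nabla_Z h(Y,\cdot)-\nabla_{\cdot}h(Y,Z)\bigr]^\sharp$ with $h=\mathcal{L}_Xg=2cg$. This vanishes because $\nabla g=0$ and $c$ is constant. Consequently $\mathcal{L}_X R=0$ and $\mathcal{L}_X\operatorname{Ric}=0$.

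As a sanity check one could also argue in coordinates using the system (I)–(III) from the proof of Theorem \ref{thm:killing}, with right-hand sides $c\,e^{2\varphi}$ and the cross equation unchanged. Then (II) gives $X^2=e^{-\varphi}\bigl(C(x)+c\int e^{\varphi}\,dy\bigr)$, and the compatibility condition (III) should force $c=0$, because $\int e^{\varphi}dy=\ln\tan(ky/2)$ cannot be absorbed by the trigonometric terms. The invariant argument is cleaner, however, and is the one I would present.
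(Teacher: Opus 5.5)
Your argument is correct and is essentially the paper's. Both reduce the soliton equation to $\mathcal{L}_Xg=2(\lambda+1)g$ and use $\varphi_t^*g=e^{2ct}g$ to show that $c\neq 0$ would rescale the curvature: the paper compares the Gauss curvatures $-e^{-2ct}$ and $-1$ at finite $t$, while you differentiate the scale invariance of the Ricci tensor at $t=0$ (or use $\mathcal{L}_X\nabla=0$) and compare $\mathcal{L}_X\operatorname{Ric}=0$ with $-\mathcal{L}_Xg=-2c\,g$. One minor slip in your optional coordinate check: with the paper's normalization of (I)--(III), the right-hand sides of (I) and (II) are $c$, not $c\,e^{2\varphi}$ (your formula for $X^2$ in fact uses $c$), but this does not affect the main proof.
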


\begin{proof}
Using $\operatorname{Ric} = -g$, the soliton equation gives $\mathcal{L}_X g = 2(\lambda + 1)g$.
Setting $c = \lambda + 1$, the flow $\{\varphi_t\}$ of $X$ satisfies
$\varphi_t^* g = e^{2ct}g$.
Since $g$ has curvature $-1$, the metric $\varphi_t^* g = e^{2ct}g$ has curvature
$K_{e^{2ct}g} = -e^{-2ct}$.
But $\varphi_t$ is a diffeomorphism, so $K_{\varphi_t^* g} = K_g = -1$.
Hence $-e^{-2ct} = -1$ for all $t$, forcing $c = 0$, i.e., $\lambda = -1$ and $\mathcal{L}_X g = 0$.
\end{proof}

\subsection{Hyperbolic Ricci solitons on $(S_a, g)$}
We construct in this subsection an explicit family of hyperbolic Ricci solitons on the strip
built from Killing vector fields, and show that the geometric parameter is uniquely forced by the curvature.
\begin{theorem}\label{thm:HRS_killing}
For every Killing field $X$ of the form \eqref{eq:killing_general} and every $\lambda \in \mathbb{R}$,
the quadruple $(g, X, \lambda, \mu = -1)$ is a hyperbolic Ricci soliton on $(S_a, g)$.
\end{theorem}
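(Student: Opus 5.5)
The plan is to substitute directly into the defining equation \eqref{eq:HRS} and use the two facts already established. By Theorem~\ref{thm:killing}, any $X$ of the form \eqref{eq:killing_general} is a linear combination of the Killing fields $K_1,K_2,K_3$. Since $\mathcal{L}_X g$ depends linearly on $X$, we get $\mathcal{L}_X g = c_1\mathcal{L}_{K_1}g + c_2\mathcal{L}_{K_2}g + c_3\mathcal{L}_{K_3}g = 0$. By Theorem~\ref{thm:curvature}, $\operatorname{Ric} = -g$.

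Next I would handle the second-order term. Because $\mathcal{L}_X g$ is the zero $(0,2)$-tensor field, and the Lie derivative of the zero tensor along any vector field vanishes, we have $\mathcal{L}_X(\mathcal{L}_X g) = \mathcal{L}_X(0) = 0$. Thus both Lie derivative terms in \eqref{eq:HRS} drop out, whatever the value of $\lambda$. The left-hand side then reduces to
\[
\operatorname{Ric} + \lambda\,\mathcal{L}_X g + \tfrac12\,\mathcal{L}_X(\mathcal{L}_X g) = -g + 0 + 0 = -g .
\]
This equals $\mu g$ precisely when $\mu = -1$. So $(g,X,\lambda,-1)$ is a hyperbolic Ricci soliton for every $\lambda\in\mathbb{R}$.

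If one wants the converse remark that $\mu$ is forced, note that $-g = \mu g$ with $g$ nondegenerate gives $\mu=-1$. This parallels Theorem~\ref{thm:soliton1}.

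There is no real obstacle here. The only point needing care is to state explicitly that the Killing condition kills both the first- and second-order Lie derivative terms, which is why $\lambda$ remains arbitrary.
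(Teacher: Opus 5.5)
Your proposal is correct and follows the same route as the paper: the Killing condition gives $\mathcal{L}_X g = 0$, hence $\mathcal{L}_X(\mathcal{L}_X g) = 0$, so the equation reduces to $\operatorname{Ric} = \mu g$, which holds with $\mu = -1$ by Theorem~\ref{thm:curvature}, with $\lambda$ left unconstrained. Your closing remark that $\mu = -1$ is forced is exactly the content of Corollary~\ref{cor:mu}.
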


\begin{proof}
Since $\mathcal{L}_X g = 0$, we have $\mathcal{L}_X(\mathcal{L}_X g) = 0$.
Equation \eqref{eq:HRS} reduces to $\operatorname{Ric} = \mu g$, which holds with $\mu = -1$ by Theorem \ref{thm:curvature}.
The parameter $\lambda$ is unconstrained.
\end{proof}

\begin{corollary}\label{cor:mu}
The value $\mu = -1$ is forced: no Killing field can produce a hyperbolic Ricci soliton
with $\mu \neq -1$, since equation \eqref{eq:HRS} reduces to $\operatorname{Ric} = \mu g$,
i.e., $-g = \mu g$, which holds only for $\mu = -1$.
\end{corollary}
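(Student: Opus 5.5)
The corollary is essentially a restatement of the computation in the proof of Theorem \ref{thm:HRS_killing}, so the plan is to run that computation in reverse, starting from an arbitrary $\mu$. Suppose $X$ is a Killing field on $(S_a,g)$, so by Theorem \ref{thm:killing} it has the form \eqref{eq:killing_general}. Suppose also that $(g,X,\lambda,\mu)$ satisfies \eqref{eq:HRS} for some $\lambda,\mu\in\mathbb{R}$.

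First, $\mathcal{L}_Xg=0$ gives $\mathcal{L}_X(\mathcal{L}_Xg)=\mathcal{L}_X(0)=0$. Hence both $X$-dependent terms in \eqref{eq:HRS} vanish, whatever the value of $\lambda$, and the equation collapses to $\operatorname{Ric}=\mu g$.

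Second, Theorem \ref{thm:curvature} gives $\operatorname{Ric}=-g$. Substituting yields $(\mu+1)g=0$ as a symmetric $2$-tensor on $S_a$. Evaluating on $(\partial_x,\partial_x)$ at any point gives
\[
(\mu+1)\,\frac{k^2}{\sin^2(ky)}=0 .
\]
The metric coefficient is strictly positive on $S_a$ because $0<ky<\pi$, so $\mu=-1$.

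Conversely, Theorem \ref{thm:HRS_killing} shows that $\mu=-1$ is attained for every $\lambda$. So $\mu=-1$ is exactly the forced value. There is no real obstacle here; the only point to state explicitly is that $g$ is nondegenerate, so a scalar multiple of it vanishes only when the scalar is zero.
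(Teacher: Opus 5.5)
Your proposal is correct and follows essentially the same route as the paper. The paper's argument is the one-line reasoning in the statement itself: the Killing condition kills both Lie-derivative terms, the equation becomes $\operatorname{Ric}=\mu g$, and $\operatorname{Ric}=-g$ then forces $\mu=-1$. Your explicit nondegeneracy check on $(\partial_x,\partial_x)$ and the converse via Theorem~\ref{thm:HRS_killing} are harmless elaborations of that argument.
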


\begin{corollary}\label{cor:family}
Since $\dim(\mathrm{Kill}(S_a,g)) = 3$, the family of hyperbolic Ricci solitons built from Killing fields
is parametrized by $(d_0, A, B, \lambda) \in \mathbb{R}^4$ with $\mu = -1$ fixed.
In particular, $X = 0$ (any $\lambda$) recovers the trivial (Einstein) hyperbolic Ricci soliton.
\end{corollary}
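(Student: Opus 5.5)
The statement immediately above is Corollary~\ref{cor:family}. It asserts that the hyperbolic Ricci solitons with Killing potential are parametrized by $(d_0,A,B,\lambda)\in\mathbb{R}^4$ with $\mu=-1$, and that $X=0$ gives the Einstein case. The plan is to assemble three results already established: Theorem~\ref{thm:killing}, Theorem~\ref{thm:HRS_killing} and Corollary~\ref{cor:mu}. No new computation should be needed.

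First I describe the potentials. By Theorem~\ref{thm:killing}, every Killing field on $(S_a,g)$ can be written uniquely as $X=c_1K_1+c_2K_2+c_3K_3$. In the proof of that theorem the three free constants arose as $d_0,A,B$. Here $d_0$ multiplies $K_1=\partial_x$. Up to the harmless normalisation by $k$, $A$ and $B$ multiply $K_2$ and $K_3$; this normalisation comes from the relation $X^2=\widetilde C'(x)\sin(ky)/k$. The map $(d_0,A,B)\mapsto X$ is therefore a linear isomorphism from $\mathbb{R}^3$ onto $\mathrm{iso}(S_a,g)$. Injectivity holds because $K_1,K_2,K_3$ are linearly independent over $\mathbb{R}$: look at the $\partial_y$-components $0$, $e^{kx}\sin(ky)$ and $-e^{-kx}\sin(ky)$, together with the $x$-dependence of the $\partial_x$-components.

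Second, I add the soliton constants. By Theorem~\ref{thm:HRS_killing}, each such $X$ together with any $\lambda\in\mathbb{R}$ and $\mu=-1$ is a hyperbolic Ricci soliton. Conversely, by Corollary~\ref{cor:mu}, a Killing potential forces $\mu=-1$. The reason is that $\mathcal{L}_Xg=0$ kills both Lie-derivative terms in \eqref{eq:HRS}, so the equation reduces to $\operatorname{Ric}=\mu g$, i.e.\ $-g=\mu g$. The same reduction shows that $\lambda$ never appears in the equation. So the set of quadruples with Killing potential is exactly $\{(g,X(d_0,A,B),\lambda,-1)\}$, and the map $(d_0,A,B,\lambda)\mapsto (X,\lambda)$ is a bijection onto it.

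Finally, I specialise to the Einstein case. Taking $d_0=A=B=0$ gives $X=0$, and the soliton equation becomes $\operatorname{Ric}=-g$, which holds by Theorem~\ref{thm:curvature} for every $\lambda$. The only point requiring care is the identification of the parameters $(d_0,A,B)$ with the coefficients $(c_1,c_2,c_3)$, and this is just linear independence. There is no real obstacle.
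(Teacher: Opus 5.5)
Your proposal is correct and matches the paper, which gives no separate proof and treats the corollary as an immediate consequence of Theorem~\ref{thm:killing}, Theorem~\ref{thm:HRS_killing} and Corollary~\ref{cor:mu}. One small slip: no normalisation by $k$ actually occurs, because $X^2=\widetilde C'(x)\sin(ky)/k=(Ae^{kx}-Be^{-kx})\sin(ky)$, so in fact $X=d_0K_1+AK_2+BK_3$ exactly; a nonzero rescaling would not affect your argument anyway.
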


\begin{proposition}\label{prop:einstein_HRS}
Let $(M, g)$ be Einstein, $\operatorname{Ric} = \mu_0 g$, and let $X$ be any Killing field on $M$.
Then $(g, X, \lambda, \mu_0)$ is a hyperbolic Ricci soliton for every $\lambda \in \mathbb{R}$.
\end{proposition}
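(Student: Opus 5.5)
The plan is to verify equation \eqref{eq:HRS} directly by substitution, as in the proof of Theorem \ref{thm:HRS_killing}, but now on a general Einstein manifold. The key input is the definition of a Killing field: $\mathcal{L}_X g = 0$. This identity kills both terms of \eqref{eq:HRS} that depend on $X$.

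First I will note that $\lambda\,\mathcal{L}_X g = 0$ for every $\lambda \in \mathbb{R}$. Next I will observe that $\mathcal{L}_X(\mathcal{L}_X g) = \mathcal{L}_X(0) = 0$. This uses only that the Lie derivative is linear and sends the zero tensor to zero; no metric computation is needed. Equation \eqref{eq:HRS} therefore reduces to $\operatorname{Ric} = \mu\, g$. By the Einstein hypothesis this holds with $\mu = \mu_0$, and $\lambda$ never appears, so it is unconstrained.

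There is no real obstacle; the argument is a one-line substitution. The only point worth checking is that the second Lie derivative vanishes identically, rather than merely at a point. This holds because $\mathcal{L}_X g$ is the zero tensor field on all of $M$.

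I would also add a remark, in the spirit of Corollary \ref{cor:mu}. Whenever $g \neq 0$ (which always holds for a Riemannian metric), the value $\mu = \mu_0$ is forced, since $\mu_0 g = \mu g$ implies $\mu = \mu_0$. In particular, taking $M = S_a$ and $\mu_0 = -1$ via Theorem \ref{thm:curvature} recovers Theorem \ref{thm:HRS_killing} as a special case.
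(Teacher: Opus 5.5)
Your proposal is correct and follows essentially the same approach as the paper's proof: $\mathcal{L}_X g = 0$ forces $\mathcal{L}_X(\mathcal{L}_X g)=0$, so \eqref{eq:HRS} reduces to $\operatorname{Ric}=\mu g$, which holds with $\mu=\mu_0$ for every $\lambda$. Your closing remark that $\mu=\mu_0$ is forced for a Killing potential is also valid; it generalizes Corollary \ref{cor:mu}.
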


\begin{proof}
Since $X$ is Killing, $\mathcal{L}_X g = 0$ and $\mathcal{L}_X(\mathcal{L}_X g) = 0$;
equation \eqref{eq:HRS} reduces to $\operatorname{Ric} = \mu_0 g$ independently of $\lambda$.
\end{proof}

\begin{remark}
Proposition \ref{prop:einstein_HRS} explains why all models isometric to the hyperbolic plane
the strip $S_a$, the vertical strip $\{-\frac{\pi}{2} < x < \frac{\pi}{2}\}$, and the Poincaré
half-plane $\mathbb{H}$ exhibit identical soliton structures ($\mu = -1$, arbitrary $\lambda$,
isometry algebra $\mathfrak{sl}(2,\mathbb{R})$), differing only in the coordinate expressions of the Killing fields.
\end{remark}

\subsection{The Sasaki metric on $T^1 S_a$}
We introduce in this subsection the unit tangent bundle of the strip and compute explicitly
the Sasaki metric induced by the hyperbolic metric, which will serve as the ambient space
for the remaining results of the paper.
\begin{theorem}\label{thm:sasaki_metric}
The unit tangent bundle $T^1 S_a$ admits global coordinates $(x, y, \theta) \in S_a \times S^1$,
where $\theta$ parametrizes unit vectors via
\[
v = \frac{a}{\pi}\sin\!\Bigl(\frac{\pi y}{a}\Bigr)
\Bigl(\cos\theta\,\partial_x + \sin\theta\,\partial_y\Bigr).
\]
The Sasaki metric on $T^1 S_a$ induced by $g$ is
\begin{equation}\label{eq:sasaki}
g^S = \frac{\pi^2}{a^2\sin^2\!\bigl(\frac{\pi y}{a}\bigr)}(dx^2 + dy^2)
+ \Bigl(d\theta + \frac{\pi}{a}\cot\!\Bigl(\frac{\pi y}{a}\Bigr)dx\Bigr)^2.
\end{equation}
\end{theorem}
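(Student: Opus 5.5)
We use the standard description of the Sasaki metric through the connection map. Since $g$ has no zeros of $\sin(ky)$ on $S_a$, the field
\[
e_1=\frac{\sin(ky)}{k}\,\partial_x,\qquad e_2=\frac{\sin(ky)}{k}\,\partial_y
\]
is a global orthonormal frame. Every unit vector at $(x,y)$ is therefore $\cos\theta\,e_1+\sin\theta\,e_2$ for a unique $\theta\in S^1$, which gives the global trivialization $T^1S_a\simeq S_a\times S^1$ and the parametrization in the statement (recall $a/\pi=1/k$). For a curve $t\mapsto (x(t),y(t),\theta(t))$ in $T^1S_a$, the Sasaki metric is
\[
g^S=\pi^*g+g(\nabla_{\dot\gamma}v,\nabla_{\dot\gamma}v),
\]
i.e. horizontal part plus the squared norm of the covariant derivative of the unit field along the base curve. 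The first term is already $\frac{k^2}{\sin^2(ky)}(dx^2+dy^2)$.

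For the vertical term, we write $v=\cos\theta\,e_1+\sin\theta\,e_2$ and use the connection $1$-form $\omega$ of the frame, defined by $\nabla e_1=\omega\otimes e_2$ and $\nabla e_2=-\omega\otimes e_1$. Then
\[
\nabla v=(d\theta+\omega)\otimes(-\sin\theta\,e_1+\cos\theta\,e_2),
\]
so $|\nabla v|^2=(d\theta+\omega)^2$. It remains to compute $\omega$, using the Christoffel symbols from the proof of Theorem \ref{thm:curvature}: $\Gamma^1_{12}=\varphi'$, $\Gamma^2_{11}=-\varphi'$, $\Gamma^2_{22}=\varphi'$, $\Gamma^1_{22}=\Gamma^2_{12}=\Gamma^1_{11}=0$, with $\varphi'=-k\cot(ky)$.

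Writing $e_i=e^{-\varphi}\partial_i$, we have
\[
\omega(\partial_x)=g(\nabla_{\partial_x}e_1,e_2)=e^{-2\varphi}\,g(\nabla_{\partial_x}\partial_x,\partial_y)=\Gamma^2_{11}=-\varphi'=k\cot(ky).
\]
The term $\partial_x(e^{-\varphi})$ vanishes because $\varphi=\varphi(y)$. Similarly,
\[
\omega(\partial_y)=g(\nabla_{\partial_y}e_1,e_2)=e^{-2\varphi}\,g(\nabla_{\partial_y}\partial_x,\partial_y)+\partial_y(e^{-\varphi})\,g(\partial_x,e_2)=\Gamma^2_{12}=0.
\]
Hence $\omega=k\cot(ky)\,dx$. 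As a cross-check, $d\omega=k^2\csc^2(ky)\,dx\wedge dy=-K\,dA$ with $dA=\frac{k^2}{\sin^2(ky)}\,dx\wedge dy$, which is consistent with $K=-1$ under the sign convention in use. Substituting into $(d\theta+\omega)^2$ gives \eqref{eq:sasaki}.

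The main obstacle is sign and orientation bookkeeping in $\omega$. If $\theta$ measured the angle with the opposite orientation, we would obtain $d\theta-\omega$, so the computation must verify, using the explicit Christoffel symbols from Theorem \ref{thm:curvature}, that the sign $+\frac{\pi}{a}\cot(\frac{\pi y}{a})\,dx$ matches the chosen parametrization. A secondary point is to justify the horizontal/vertical splitting of the Sasaki metric restricted to the unit bundle. This follows because $g(\nabla v,v)=0$, so the vertical component is tangent to the fibre circle, and the fibre-coordinate vector $\partial_\theta$ has unit length.
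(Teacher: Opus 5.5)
Your proof is correct and follows the same route as the paper. Both use the global orthonormal frame $e_i=\frac{\sin(ky)}{k}\partial_i$, the connection form $\omega=k\cot(ky)\,dx$, and the identity $\nabla v=(d\theta+\omega)\otimes(-\sin\theta\,e_1+\cos\theta\,e_2)$, which gives $g^S=\pi^*g+(d\theta+\omega)^2$. The only difference is that you derive $\omega$ from the Christoffel symbols and cross-check it against $d\omega=-K\,dA$, where the paper simply quotes $\omega_{12}=-\varphi_y\,dx+\varphi_x\,dy$.
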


\begin{proof}
Set $\lambda(y) = \frac{\pi}{a}\sin\frac{\pi y}{a}$, so that $g = \lambda(y)^2(dx^2 + dy^2)$.
A unit tangent vector at $(x,y)$ satisfies $(v^1)^2 + (v^2)^2 = \lambda(y)^{-2}$, which is
parametrized by a unique $\theta \in S^1$ as stated.

For the orthonormal frame $e_1 = \lambda^{-1}\partial_x$, $e_2 = \lambda^{-1}\partial_y$, the
conformal factor $\varphi(y) = \ln\frac{\pi}{a} - \ln\sin\frac{\pi y}{a}$ satisfies
$\varphi_x = 0$ and $\varphi_y = -\frac{\pi}{a}\cot\frac{\pi y}{a}$.
The connection 1-form of the orthonormal frame is $\omega_{12} = -\varphi_y\,dx + \varphi_x\,dy$,
i.e., $\omega_{12} = \frac{\pi}{a}\cot\frac{\pi y}{a}\,dx$.

For the unit vector $v = \cos\theta\,e_1 + \sin\theta\,e_2$, the vertical component of its
differential is $d\theta + \omega_{12}$.
By definition, $g^S = g + (d\theta + \omega_{12})^2$, which gives \eqref{eq:sasaki}.
\end{proof}

\subsection{Killing vector fields of $(T^1 S_a, g^S)$}
We determine in this subsection the complete Lie algebra of Killing vector fields of the unit
tangent bundle endowed with the Sasaki metric, showing that it is four-dimensional, one
dimension higher than that of the base strip.

\begin{theorem}\label{thm:killing_sasaki}
The Lie algebra of Killing vector fields of $(T^1 S_a, g^S)$ is 4-dimensional, spanned by
\begin{align*}
K_1 &= \partial_x,\\
K_2 &= \partial_\theta,\\
K_3 &= e^{-kx}\!\left[\cos(ky)\,\partial_x - \sin(ky)\,\partial_y + k\sin(ky)\,\partial_\theta\right],\\
K_4 &= e^{kx}\!\left[\cos(ky)\,\partial_x + \sin(ky)\,\partial_y + k\sin(ky)\,\partial_\theta\right].
\end{align*}
Every Killing field on $(T^1 S_a, g^S)$ writes uniquely as
$K = c_1 K_1 + c_2 K_2 + c_3 K_3 + c_4 K_4$ with $c_i \in \mathbb{R}$.
\end{theorem}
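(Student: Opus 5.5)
The plan has two parts. First, check directly that $K_1,\dots,K_4$ are Killing. Second, prove that no other Killing fields exist by bounding $\dim \mathrm{iso}(T^1S_a,g^S)\le 4$ with a submersion argument.

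\textbf{Step 1: the four fields are Killing.} Write $\eta=d\theta+\omega_{12}$ with $\omega_{12}=k\cot(ky)\,dx$, so that $g^S=\pi^*g+\eta\otimes\eta$ by Theorem \ref{thm:sasaki_metric}.
\begin{itemize}
\item Each $K_i$ is $\pi$-related to a Killing field of the base: $K_1\mapsto\partial_x$, $K_2\mapsto 0$, $K_3\mapsto$ the field $K_3$ of Theorem \ref{thm:killing}, and $K_4\mapsto$ the field $K_2$ of Theorem \ref{thm:killing}. Hence $\mathcal{L}_{K_i}\pi^*g=\pi^*(\mathcal{L}_{\pi_*K_i}g)=0$ by Theorem \ref{thm:killing}.
\item It therefore suffices to show $\mathcal{L}_{K_i}\eta=0$, since then $\mathcal{L}_{K_i}(\eta\otimes\eta)=0$ too. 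By Cartan's formula, $\mathcal{L}_{K}\eta=d(\iota_K\eta)+\iota_K d\eta$, with $d\eta=k^2\csc^2(ky)\,dx\wedge dy$.
\item For $K_1$ and $K_2$ this is immediate, since $\iota_K\eta$ is constant and $\iota_K d\eta$ vanishes or is handled trivially.
\item For $K_4$ one finds $\iota_{K_4}\eta=k e^{kx}/\sin(ky)$ and $\iota_{K_4}d\eta=k^2e^{kx}\csc^2(ky)\bigl(\cos(ky)\,dy-\sin(ky)\,dx\bigr)$. The $dx$ and $dy$ terms of $d(\iota_{K_4}\eta)$ cancel these exactly.
\item The computation for $K_3$ is identical after $x\mapsto -x$, with signs tracked.
\item Linear independence is clear: $K_2$ is vertical, and $K_1,K_3,K_4$ project to a basis of $\mathrm{iso}(S_a,g)$.
\end{itemize}

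\textbf{Step 2: no further Killing fields.} The idea is that every Killing field $X$ of $g^S$ preserves the unit vertical field $\xi=\partial_\theta$ up to sign, i.e.\ $[X,\xi]=0$.
\begin{itemize}
\item To justify this, I would compute the Ricci tensor in the orthonormal frame $\{\lambda^{-1}\partial_x^h,\lambda^{-1}\partial_y^h,\xi\}$, using O'Neill's formulas for the Riemannian submersion $\pi$ with totally geodesic fibres, where the horizontal distribution has curvature form $d\eta$ of constant norm.
\item This should give eigenvalue $\tfrac12$ on $\xi$ and $-\tfrac32$ on $\xi^\perp$, i.e.\ $\operatorname{Ric}^S=-\tfrac32 g^S+2\eta\otimes\eta$.
\item Since a Killing flow preserves $\operatorname{Ric}^S$, it preserves the simple eigenline $\mathbb{R}\xi$, so $[X,\xi]=f\xi$. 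Because $\xi$ is a unit field and $X$ preserves lengths, $f=0$ (up to the sign ambiguity, which disappears at the Lie algebra level).
\end{itemize}

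With $[X,\xi]=0$, $X$ is $\theta$-invariant and hence projects to a well-defined field $\bar X=\pi_*X$ on $S_a$. The flow of $X$ maps fibres to fibres isometrically on horizontal spaces, so $\bar X$ is Killing for $g$, and Theorem \ref{thm:killing} gives $\bar X=c_1\partial_x+c_3 K_3+c_4K_2$ (with $K_3,K_2$ as in that theorem). Then $X-(c_1K_1+c_3K_3+c_4K_4)$ is a vertical Killing field $h\,\xi$. For such a field, $\mathcal{L}_{h\xi}g^S=2\,dh\odot\eta$ (since $\xi$ is Killing), which vanishes only if $dh=0$. Hence $h=c_2$ is constant, and the decomposition is unique. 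As a consistency check, the resulting bound $4<6$ is compatible with $g^S$ not having constant curvature.

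\textbf{Main obstacle.} The delicate point is the claim that Killing fields preserve $\xi$, because it relies on the Ricci computation, which appears only later in the paper. If that route is awkward, I would instead solve the ten Killing equations for $X=X^1\partial_x+X^2\partial_y+X^3\partial_\theta$ directly. The $\theta\theta$-component gives $\partial_\theta(X^3+k\cot(ky)X^1)=0$, and the mixed components then force $\partial_\theta X^1=\partial_\theta X^2=0$. I expect the periodicity in $\theta$ to exclude the extra local solutions, which is the step I would verify most carefully.
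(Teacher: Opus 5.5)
Your proposal is correct, and it obtains the upper bound $\dim\le 4$ by a different route from the paper.

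The paper works entirely in coordinates. It asserts that a Killing field may be taken independent of $\theta$ ``from the circle symmetry'', and then solves the Killing system for $P\partial_x+Q\partial_y+R\partial_\theta$, leaving most of the integration implicit.

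You instead use $\operatorname{Ric}^S=-\tfrac32 g^S+2\eta\otimes\eta$. Since $\mathcal{L}_X\operatorname{Ric}^S=0$ for Killing $X$, the bracket $[X,\xi]$ lies in the simple $\tfrac12$-eigenline, and $g^S(\xi,\xi)=1$ forces its coefficient to vanish. This gives $[X,\xi]=0$, hence $\mathcal{L}_X\eta=0$ and $\mathcal{L}_{\pi_*X}g=0$. The problem then reduces to Theorem~\ref{thm:killing} together with the identity $\mathcal{L}_{h\xi}g^S=dh\otimes\eta+\eta\otimes dh$.

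What your route buys:
\begin{itemize}
\item It actually proves the $\theta$-independence that the paper only asserts. That step does not follow from $\partial_\theta$ being Killing alone: on the round $S^3$, not every Killing field commutes with the Hopf field.
\item The argument is purely local, so periodicity in $\theta$ plays no role. Your worry about ``extra local solutions'' in the fallback paragraph is therefore unfounded.
\item It exhibits the exact sequence $0\to\mathbb{R}\partial_\theta\to\mathfrak{Kill}(T^1S_a,g^S)\to\mathrm{iso}(S_a,g)\to 0$, which underlies the $\mathfrak{sl}(2,\mathbb{R})\oplus\mathbb{R}$ structure claimed elsewhere in the paper.
\end{itemize}
The price is a dependence on the Ricci computation. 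In the paper this comes later (Theorem~\ref{thm:einstein}), but it is proved there from the frame $(E_i)$ independently of the present theorem, so there is no circularity.

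Two small slips:
\begin{itemize}
\item For $K_1=\partial_x$, $\iota_{K_1}\eta=k\cot(ky)$ is not constant. The identity $\mathcal{L}_{\partial_x}\eta=0$ holds because the coefficients of $\eta$ do not depend on $x$; equivalently, $d(\iota_{\partial_x}\eta)+\iota_{\partial_x}d\eta=(c'+w)\,dy=0$.
\item In dimension $3$ there are six Killing equations, not ten.
\end{itemize}
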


\begin{proof}
In coordinates $(x, y, \theta)$, the metric components of \eqref{eq:sasaki} are
\[
g^S_{xx} = \frac{k^2(1 + \cos^2(ky))}{\sin^2(ky)}, \quad
g^S_{yy} = \frac{k^2}{\sin^2(ky)}, \quad
g^S_{x\theta} = k\cot(ky), \quad
g^S_{\theta\theta} = 1,
\]
with all other components zero.
Since these coefficients are independent of $x$ and of $\theta$, both $\partial_x$ and
$\partial_\theta$ are immediately Killing fields.

For a general vector field $K = P(x,y)\partial_x + Q(x,y)\partial_y + R(x,y)\partial_\theta$
(independence from $\theta$ follows from the circle symmetry), the Killing equations
$\mathcal{L}_K g^S = 0$ form a system of six PDEs.
The $(y,y)$-component gives $Q_y - k\cot(ky)Q = 0$, which integrates to
$Q = (c_4 e^{kx} - c_3 e^{-kx})\sin(ky)$.
Combining with the remaining components yields
\begin{align*}
P(x,y) &= c_1 + c_3 e^{-kx}\cos(ky) + c_4 e^{kx}\cos(ky),\\
Q(x,y) &= -c_3 e^{-kx}\sin(ky) + c_4 e^{kx}\sin(ky),\\
R(x,y) &= c_2 + k\sin(ky)(c_3 e^{-kx} + c_4 e^{kx}).
\end{align*}
Linear independence of $K_1, K_2, K_3, K_4$ follows by examining the $\partial_y$-component:
$\sin(ky)(-c_3 e^{-kx} + c_4 e^{kx}) = 0$ for all $(x,y)$ forces $c_3 = c_4 = 0$, and then
$c_1\partial_x + c_2\partial_\theta = 0$ forces $c_1 = c_2 = 0$.
\end{proof}

\subsection{No Existence of the Ricci Solitons and Hyperbolic Ricci Solitons on $T^{1}S_{a}$}
In this section, we will prove that, for the Sasaki metric, neither a Ricci soliton nor a hyperbolic Ricci soliton exists. To establish this result, we first introduce some useful notation.
Let $S_a=\{(x,y)\in\mathbb{R}^2 : 0<y<a\}$ and set $k=\pi/a$. We introduce the functions
\[
w(y)=\frac{k^2}{\sin^2(ky)},\qquad c(y)=k\cot(ky),\qquad 0<y<a .
\]
The hyperbolic metric of the strip is
\[
g_{S_{a}}=w\,(dx^2+dy^2)=\frac{\pi^2}{a^2\sin^2\!\left(\frac{\pi y}{a}\right)}\,(dx^2+dy^2),
\]
with area form $\mathrm{vol}_{S_a}=w\,dx\wedge dy$. Let $\pi:T^1S_a=S_a\times\mathbb{S}^1\to S_a$ be the projection, and set
\[
\eta=d\theta+c\,dx,\qquad g^S=\pi^*g_{S_{a}}+\eta\otimes\eta .
\]
We have the elementary identities
\begin{equation}\label{P}
c'=-\frac{k^2}{\sin^2(ky)}=-w,\qquad
w=k^2\bigl(1+\cot^2(ky)\bigr)=k^2+c^2,\qquad
(\sqrt{w})'=-\frac{k^2\cos(ky)}{\sin^2(ky)}=-c\sqrt{w}.
\end{equation}
In particular, since $c$ depends only on $y$,
\[
d\eta=c'\,dy\wedge dx=-c'\,dx\wedge dy=w\,dx\wedge dy,
\]
that is,
\begin{equation}\label{deta}
d\eta=\pi^*\mathrm{vol}_{S_a}.
\end{equation}

\subsection{An orthonormal frame with constant structure constants}
Let $\omega:=c\,dx$ and consider the $1$-forms
\[
e^1=\sqrt{w}\,dx,\qquad e^2=\sqrt{w}\,dy,\qquad e^3=\eta .
\]
And we define \[
f^1=\phantom{-}\cos\theta\,e^1+\sin\theta\,e^2,\,\,\,
f^2=-\sin\theta\,e^1+\cos\theta\,e^2,\,\,\,
f^3=e^3 .
\]
\begin{lemma}\label{lem:frame}

\begin{enumerate}
\item[\textup{(i)}] The family $(e^1,e^2,e^3)$ is an orthonormal coframe for $g^S$, and
\[
de^1=\omega\wedge e^2,\,\,
de^2=-\,\omega\wedge e^1,\,\,
de^3=e^1\wedge e^2 .
\]
\item[\textup{(ii)}] 
The family $(f^1,f^2,f^3)$ is a global orthonormal coframe for $g^S$, and
\begin{equation}\label{struct}
df^1=-f^2\wedge f^3,\,\,\,
df^2=-f^3\wedge f^1,\,\,\,
df^3=f^1\wedge f^2 .
\end{equation}
\item[\textup{(iii)}] If $(E_1,E_2,E_3)$ denotes the dual frame of $(f^1,f^2,f^3)$, then
\begin{equation}\label{brackets}
[E_2,E_3]=E_1,\,\,\,
[E_3,E_1]=E_2,\,\,\,
[E_1,E_2]=-E_3 .
\end{equation}
\end{enumerate}
\end{lemma}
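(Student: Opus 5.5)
The plan is a direct computation with exterior derivatives. I will first verify the structure equations for the coframe $(e^1,e^2,e^3)$. Next I will transfer them to the rotated coframe $(f^1,f^2,f^3)$ using only $d\theta$ and the fact that $\omega\wedge e^1=0$. Finally I will dualize to brackets via the Cartan formula. Throughout I will use the identities \eqref{P} and \eqref{deta}.

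For (i), orthonormality is immediate. Indeed, $g^S=w(dx^2+dy^2)+\eta\otimes\eta=(e^1)^2+(e^2)^2+(e^3)^2$, and the three forms are pointwise independent because $\sqrt w>0$ and $\eta$ contains $d\theta$. For the differentials I will use that $w$ and $c$ depend only on $y$:
\begin{itemize}
\item $de^1=(\sqrt w)'\,dy\wedge dx=c\sqrt w\,dx\wedge dy=\omega\wedge e^2$, by \eqref{P};
\item $de^2=d(\sqrt w)\wedge dy=0$, and also $\omega\wedge e^1=c\sqrt w\,dx\wedge dx=0$, which gives $de^2=-\omega\wedge e^1$;
\item $de^3=d\eta=w\,dx\wedge dy=e^1\wedge e^2$, by \eqref{deta}.
\end{itemize}

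For (ii), $(f^1,f^2)$ is a pointwise rotation of $(e^1,e^2)$, so orthonormality persists. The frame is global because $\theta$ is a global angular coordinate on $S_a\times\mathbb S^1$ and $\cos\theta,\sin\theta$ are well defined. Differentiating $f^1$ and using (i) gives
\[
df^1=d\theta\wedge(-\sin\theta\,e^1+\cos\theta\,e^2)+\cos\theta\,\omega\wedge e^2 .
\]
The key observation is that $\omega\wedge e^1=0$, so $\omega\wedge f^2=\cos\theta\,\omega\wedge e^2$. Hence $df^1=(d\theta+\omega)\wedge f^2=f^3\wedge f^2=-f^2\wedge f^3$.

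Similarly,
\[
df^2=-d\theta\wedge(\cos\theta\,e^1+\sin\theta\,e^2)-\sin\theta\,\omega\wedge e^2 ,
\]
and since $\omega\wedge f^1=\sin\theta\,\omega\wedge e^2$, this equals $-(d\theta+\omega)\wedge f^1=-f^3\wedge f^1$. Lastly, $f^1\wedge f^2=(\cos^2\theta+\sin^2\theta)\,e^1\wedge e^2=e^1\wedge e^2$ because the rotation has determinant $1$, so $df^3=de^3=f^1\wedge f^2$.

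For (iii), I will apply $d\alpha(X,Y)=X\alpha(Y)-Y\alpha(X)-\alpha([X,Y])$ to the dual frame. Since $f^k(E_j)$ is constant, this gives $f^k([E_i,E_j])=-df^k(E_i,E_j)$. Reading off from \eqref{struct}:
\begin{itemize}
\item $f^1([E_2,E_3])=-df^1(E_2,E_3)=1$, while $f^2$ and $f^3$ vanish on $[E_2,E_3]$, so $[E_2,E_3]=E_1$;
\item likewise $[E_3,E_1]=E_2$;
\item and $f^3([E_1,E_2])=-1$, so $[E_1,E_2]=-E_3$.
\end{itemize}

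The only delicate point is the step in (ii) where the $\theta$-dependent terms must recombine into $f^3=d\theta+\omega$. This rests on the vanishing $\omega\wedge e^1=0$, and I expect the main risk to be sign bookkeeping (orientation of $dx\wedge dy$ and the sign of $(\sqrt w)'$). Everything else is routine.
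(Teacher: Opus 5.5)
Your proposal is correct and follows the paper's proof essentially step for step. It uses the same computation of $de^i$ from $(\sqrt w)'=-c\sqrt w$ and $d\eta=w\,dx\wedge dy$, the same recombination of the $\theta$-terms into $(d\theta+\omega)\wedge f^j$ (you invoke $\omega\wedge e^1=0$ directly where the paper writes $de^2=-\omega\wedge e^1$), and the same dualization $f^k([E_i,E_j])=-df^k(E_i,E_j)$, with all signs checking out.
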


\begin{proof}
\textbf{(i)} Orthonormality of $(e^1,e^2,e^3)$ is immediate from the definition of $g^S$. By \eqref{P} we have $d(\sqrt{w})=(\sqrt{w})'\,dy=-c\sqrt{w}\,dy$, hence
\[
\begin{aligned}
de^1&=d(\sqrt{w})\wedge dx=-c\sqrt{w}\,dy\wedge dx=c\,dx\wedge\bigl(\sqrt{w}\,dy\bigr)=\omega\wedge e^2,\\[2pt]
de^2&=d(\sqrt{w})\wedge dy=-c\sqrt{w}\,dy\wedge dy=0=-\,\omega\wedge e^1,
\end{aligned}
\]
where the last equality holds because $\omega\wedge e^1=c\sqrt{w}\,dx\wedge dx=0$. Finally, by \eqref{deta},
\[
de^3=d\eta=w\,dx\wedge dy=e^1\wedge e^2 .
\]

\medskip
\textbf{(ii)} The family $(f^1,f^2,f^3)$ is obtained from $(e^1,e^2,e^3)$ by a rotation of angle $\theta$ in the plane spanned by $e^1,e^2$, so it is a global orthonormal coframe. Recall that $d\theta+\omega=e^3$. Using (i),
\[
\begin{aligned}
df^1&=-\sin\theta\,d\theta\wedge e^1+\cos\theta\,d\theta\wedge e^2+\cos\theta\,de^1+\sin\theta\,de^2\\
&=-\sin\theta\,d\theta\wedge e^1+\cos\theta\,d\theta\wedge e^2+\cos\theta\,\omega\wedge e^2-\sin\theta\,\omega\wedge e^1\\
&=(d\theta+\omega)\wedge\bigl(\cos\theta\,e^2-\sin\theta\,e^1\bigr)\\
&=e^3\wedge f^2=-\,f^2\wedge f^3,
\end{aligned}
\]
and similarly
\[
\begin{aligned}
df^2&=-\cos\theta\,d\theta\wedge e^1-\sin\theta\,d\theta\wedge e^2-\sin\theta\,de^1+\cos\theta\,de^2\\
&=-\cos\theta\,d\theta\wedge e^1-\sin\theta\,d\theta\wedge e^2-\sin\theta\,\omega\wedge e^2-\cos\theta\,\omega\wedge e^1\\
&=-(d\theta+\omega)\wedge\bigl(\cos\theta\,e^1+\sin\theta\,e^2\bigr)\\
&=-\,e^3\wedge f^1=-\,f^3\wedge f^1 .
\end{aligned}
\]
Lastly, since a rotation preserves $e^1\wedge e^2$,
\[
df^3=de^3=e^1\wedge e^2=f^1\wedge f^2 .
\]
This proves \eqref{struct}.

\medskip
\textbf{(iii)} For the dual frame $(E_i)$ one has $df^k(E_i,E_j)=-f^k\bigl([E_i,E_j]\bigr)$. Reading off \eqref{struct}:
\[
\begin{aligned}
df^1(E_2,E_3)&=-1 &&\Longrightarrow&& [E_2,E_3]=E_1,\\
df^2(E_3,E_1)&=-1 &&\Longrightarrow&& [E_3,E_1]=E_2,\\
df^3(E_1,E_2)&=\phantom{-}1 &&\Longrightarrow&& [E_1,E_2]=-E_3 .
\end{aligned}
\]
All other components vanish, which gives \eqref{brackets}.
\end{proof}

\begin{remark}
We have
\[
f^1\otimes f^1+f^2\otimes f^2=e^1\otimes e^1+e^2\otimes e^2=\pi^*g_{S_{a}},
\qquad f^3=\eta .
\]
The frame $(E_1,E_2,E_3)$ is left-invariant for the Lie group structure of
$T^1S_a\simeq\mathrm{PSL}(2,\mathbb{R})$ (see \cite{am}), and $E_3$ is the unit Killing field $K_0=\partial_\theta$.
\end{remark}

\subsection{The Lie derivative of $g^S$ along a $\theta$-invariant field}

A vector field $X$ on $T^1S_a$ is called \emph{$\theta$-invariant} if $[\partial_\theta,X]=0$, that is,
\[
X=A\,\partial_x+B\,\partial_y+C\,\partial_\theta,
\]
where $A,B,C$ are functions of $(x,y)$ only. For such a field we set
\[
\begin{aligned}
V&=A\,\partial_x+B\,\partial_y &&\text{(a vector field on } S_a),\\
F&=\eta(X)=C+cA &&\text{(a function on } S_a),\\
\beta&=dF+\iota_V\mathrm{vol}_B &&\text{(a $1$-form on } S_a).
\end{aligned}
\]
Note that $F$ is an \emph{arbitrary} function on $S_a$ (because $C$ is), and that
$d\beta=\operatorname{div}(V)\,\mathrm{vol}_B$.

\begin{lemma}\label{lem:Lie}
Let $X$ be a $\theta$-invariant vector field. Then $\pi_*X=V$, and for every tensor field $\tau$ on $S_a$,
\[
\mathcal{L}_X\,\pi^*\tau=\pi^*\mathcal{L}_V\tau .
\]
Moreover,
\[
\mathcal{L}_X\eta=\pi^*\beta,
\qquad
\mathcal{L}_X g^S=\pi^*\bigl(\mathcal{L}_V g_{S_{a}}\bigr)+\eta\otimes\pi^*\beta+\pi^*\beta\otimes\eta .
\]
\end{lemma}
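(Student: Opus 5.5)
The plan is to prove the three claims in order, using Cartan's formula $\mathcal{L}_X=d\iota_X+\iota_Xd$ on forms and the Leibniz rule for Lie derivatives of tensor products. No frame computation is needed.

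\textbf{Step 1: $\pi_*X=V$ and naturality.} Since $\pi(x,y,\theta)=(x,y)$ and $A,B,C$ depend only on $(x,y)$, we have $d\pi(X)=A\partial_x+B\partial_y=V\circ\pi$. So $X$ and $V$ are $\pi$-related, and their flows satisfy $\pi\circ\phi^X_t=\phi^V_t\circ\pi$. Differentiating $(\phi^X_t)^*\pi^*\tau=\pi^*(\phi^V_t)^*\tau$ at $t=0$ gives $\mathcal{L}_X\pi^*\tau=\pi^*\mathcal{L}_V\tau$. Alternatively, one checks it directly on functions, on exact $1$-forms $d(h\circ\pi)$ (using $X(h\circ\pi)=(Vh)\circ\pi$), and then extends by Leibniz.

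\textbf{Step 2: $\mathcal{L}_X\eta$.} By Cartan's formula,
\[
\mathcal{L}_X\eta=d(\iota_X\eta)+\iota_X d\eta=d(F\circ\pi)+\iota_X\pi^*\mathrm{vol}_{S_a}.
\]
Here $\iota_X\eta=C+cA=F$ is constant in $\theta$, and we used \eqref{deta}. Because $X$ is $\pi$-related to $V$, we get $\iota_X\pi^*\mathrm{vol}_{S_a}=\pi^*(\iota_V\mathrm{vol}_{S_a})$: evaluating on $Y$ gives $\mathrm{vol}(\pi_*X,\pi_*Y)=\mathrm{vol}(V,\pi_*Y)$. Hence $\mathcal{L}_X\eta=\pi^*(dF+\iota_V\mathrm{vol}_{S_a})=\pi^*\beta$, reading $\mathrm{vol}_B$ in the definition of $\beta$ as $\mathrm{vol}_{S_a}$. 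The side remark $d\beta=d\iota_V\mathrm{vol}=\mathcal{L}_V\mathrm{vol}=\operatorname{div}(V)\mathrm{vol}$ also follows from Cartan's formula.

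\textbf{Step 3: $\mathcal{L}_Xg^S$.} Write $g^S=\pi^*g_{S_a}+\eta\otimes\eta$. Step 1 handles the first summand. For the second, the Leibniz rule gives
\[
\mathcal{L}_X(\eta\otimes\eta)=\mathcal{L}_X\eta\otimes\eta+\eta\otimes\mathcal{L}_X\eta,
\]
and Step 2 then yields the stated formula.

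The only delicate point is the identity $\iota_X\pi^*\mathrm{vol}=\pi^*\iota_V\mathrm{vol}$ together with the correct sign in $d\eta=+\pi^*\mathrm{vol}_{S_a}$. Both are already established in \eqref{deta}, so I expect no real obstacle. As a sanity check, I would verify on $X=\partial_\theta$: here $V=0$ and $F=1$, so $\beta=0$ and the formula correctly gives $\mathcal{L}_{\partial_\theta}g^S=0$.
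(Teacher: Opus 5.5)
Your proposal is correct and follows the same route as the paper. You use $\pi$-relatedness of $X$ and $V$ for the naturality identity, Cartan's formula with $d\eta=\pi^*\mathrm{vol}_{S_a}$ to get $\mathcal{L}_X\eta=\pi^*\beta$, and the Leibniz rule on $g^S=\pi^*g_{S_a}+\eta\otimes\eta$; your flow-intertwining argument and the explicit check that $\iota_X\pi^*\mathrm{vol}_{S_a}=\pi^*\iota_V\mathrm{vol}_{S_a}$ simply spell out steps the paper states without detail.
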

\begin{proof}
Since $A$ and $B$ do not depend on $\theta$, the fields $X$ and $V$ are $\pi$-related, whence
\[
\mathcal{L}_X\,\pi^*\tau=\pi^*\mathcal{L}_V\tau .
\]
By Cartan's formula and \eqref{deta},
\[
\begin{aligned}
\mathcal{L}_X\eta
&=d\bigl(\eta(X)\bigr)+\iota_X\,d\eta\\
&=dF+\iota_X\,\pi^*\mathrm{vol}_B\\
&=dF+\pi^*\bigl(\iota_V\mathrm{vol}_B\bigr)\\
&=\pi^*\beta .
\end{aligned}
\]
Finally, since $g^S=\pi^*g_{S_{a}}+\eta\otimes\eta$ and $\mathcal{L}_X$ is a derivation of the tensor product,
\[
\begin{aligned}
\mathcal{L}_Xg^S
&=\mathcal{L}_X\pi^*g_{S_{a}}+\mathcal{L}_X\eta\otimes\eta+\eta\otimes\mathcal{L}_X\eta\\
&=\pi^*\bigl(\mathcal{L}_Vg_{S_{a}}\bigr)+\pi^*\beta\otimes\eta+\eta\otimes\pi^*\beta .
\end{aligned}
\]
\end{proof}

\subsection{The Sasaki metric is not Einstein}

\begin{theorem}\label{thm:einstein}
The Ricci curvature of $(T^1S_a,g^S)$ is
\[
\operatorname{Ric}^S
=-\tfrac32\,\pi^*g_{S_{a}}+\tfrac12\,\eta\otimes\eta
=-\tfrac32\,g^S+2\,\eta\otimes\eta .
\]
Equivalently, $\operatorname{Ric}^S$ is diagonal in the frame $(E_1,E_2,E_3)$, with eigenvalues
\[
\operatorname{Ric}^S(E_1,E_1)=\operatorname{Ric}^S(E_2,E_2)=-\tfrac32,
\qquad
\operatorname{Ric}^S(E_3,E_3)=\tfrac12 .
\]
The scalar curvature is $R^S=-\tfrac52$. In particular, $g^S$ is not an Einstein metric (and, since $\dim T^1S_a=3$, it does not have constant sectional curvature). These facts do not depend on $a$.
\end{theorem}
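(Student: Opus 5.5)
The plan is to compute $\operatorname{Ric}^S$ entirely in the global orthonormal frame $(E_1,E_2,E_3)$ of Lemma~\ref{lem:frame}. Its brackets \eqref{brackets} have \emph{constant} coefficients, and this is what makes the computation work. For an orthonormal frame with constant structure constants $[E_i,E_j]=\sum_k c_{ij}^k E_k$, the Koszul formula gives the connection coefficients as constants. The left-invariance noted in the Remark is not needed:
\[
g^S(\nabla_{E_i}E_j,E_k)=\tfrac12\bigl(c_{ij}^k-c_{jk}^i+c_{ki}^j\bigr).
\]
All derivatives of these coefficients along the frame vanish. So the curvature is given by the purely algebraic expressions
\[
R(E_i,E_j)E_k=\nabla_{E_i}\nabla_{E_j}E_k-\nabla_{E_j}\nabla_{E_i}E_k-\nabla_{[E_i,E_j]}E_k ,
\]
exactly as on a three-dimensional unimodular Lie algebra.

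The brackets have Milnor's form $[E_2,E_3]=\lambda_1E_1$, $[E_3,E_1]=\lambda_2E_2$, $[E_1,E_2]=\lambda_3E_3$ with $(\lambda_1,\lambda_2,\lambda_3)=(1,1,-1)$. The Koszul computation then gives $\nabla_{E_i}E_j=\mu_i\,\epsilon_{ijk}E_k$ (up to the standard sign convention), where
\[
\mu_i=\tfrac12(\lambda_1+\lambda_2+\lambda_3)-\lambda_i ,
\]
so $\mu_1=\mu_2=-\tfrac12$ and $\mu_3=\tfrac32$. I will write these six covariant derivatives out explicitly, for instance $\nabla_{E_1}E_2=-\tfrac12E_3$ and $\nabla_{E_3}E_1=\tfrac32E_2$. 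I will then verify torsion-freeness against \eqref{brackets} as a check.

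From these I will compute the sectional curvatures:
\[
K(E_1,E_2)=-\tfrac{7}{4},\qquad K(E_1,E_3)=K(E_2,E_3)=\tfrac14 .
\]
I will also check that the mixed terms $\langle R(E_i,E_k)E_k,E_j\rangle$ with $i\ne j$ vanish, which holds because each $\nabla_{E_i}$ acts as a rotation about the $E_i$-axis. Summing gives
\[
\operatorname{Ric}^S(E_1,E_1)=\operatorname{Ric}^S(E_2,E_2)=-\tfrac74+\tfrac14=-\tfrac32,\qquad \operatorname{Ric}^S(E_3,E_3)=\tfrac12 ,
\]
with all off-diagonal entries zero. This agrees with Milnor's formula $\operatorname{Ric}(E_1)=2\mu_2\mu_3$ and its cyclic permutations.

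Next I will translate back to tensors. By the Remark, $f^1\otimes f^1+f^2\otimes f^2=\pi^*g_{S_a}$ and $f^3=\eta$. Hence $\operatorname{Ric}^S=-\tfrac32\pi^*g_{S_a}+\tfrac12\eta\otimes\eta$. Substituting $\pi^*g_{S_a}=g^S-\eta\otimes\eta$ gives $-\tfrac32g^S+2\eta\otimes\eta$. The trace is $R^S=-\tfrac32-\tfrac32+\tfrac12=-\tfrac52$. Non-Einstein follows from the unequal eigenvalues $-\tfrac32\neq\tfrac12$. In dimension three, constant sectional curvature would force the metric to be Einstein, so $g^S$ does not have constant sectional curvature either. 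Independence of $a$ is immediate, since \eqref{brackets} contains no $a$.

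The main obstacle is sign bookkeeping in the connection coefficients: the convention $df(E_i,E_j)=-f([E_i,E_j])$ and the signs of the $\mu_i$. I will cross-check in two ways. First, $\mu_3$ must produce the positive sectional curvature $\tfrac14$ on vertical planes, as expected for a Sasaki-type circle bundle. Second, $K(E_1,E_2)=-\tfrac74$ should equal the base curvature $-1$ minus $\tfrac34|d\eta|^2$, which is O'Neill's formula for the Riemannian submersion $\pi$ with totally geodesic fibres.
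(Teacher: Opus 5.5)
Your proposal is correct and follows essentially the paper's route: both use the global orthonormal frame $(E_1,E_2,E_3)$ with constant brackets $(\lambda_1,\lambda_2,\lambda_3)=(1,1,-1)$, obtain the eigenvalues $-\tfrac32,-\tfrac32,\tfrac12$ from Milnor's $\mu_i$, and translate back via $f^1\otimes f^1+f^2\otimes f^2=\pi^*g_{S_a}$ and $f^3=\eta$. The paper cites Milnor's Lemma~4.1 directly, whereas you rederive it from the Koszul formula. Your connection coefficients and sectional curvatures $-\tfrac74,\tfrac14,\tfrac14$ are correct, and the rederivation has the small merit of showing that only constant structure constants are needed, not the left-invariance asserted in the Remark.
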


\begin{proof}
By \eqref{brackets}, the orthonormal frame $(E_1,E_2,E_3)$ satisfies
\[
[E_2,E_3]=\lambda_1E_1,\qquad
[E_3,E_1]=\lambda_2E_2,\qquad
[E_1,E_2]=\lambda_3E_3,
\qquad (\lambda_1,\lambda_2,\lambda_3)=(1,1,-1).
\]

\medskip
By Milnor's formula in \cite{Milnor} Lemma~4.1, the Ricci tensor is diagonal in this frame and
\[
\begin{aligned}
\operatorname{Ric}(E_1)&=2\mu_2\mu_3\,E_1,\\
\operatorname{Ric}(E_2)&=2\mu_1\mu_3\,E_2,\\
\operatorname{Ric}(E_3)&=2\mu_1\mu_2\,E_3,
\end{aligned}
\qquad
\mu_i=\tfrac12(\lambda_1+\lambda_2+\lambda_3)-\lambda_i .
\]
Here $\tfrac12(\lambda_1+\lambda_2+\lambda_3)=\tfrac12$, hence
\[
\mu_1=\mu_2=-\tfrac12,\qquad \mu_3=\tfrac32,
\]
and therefore
\[
\begin{aligned}
\operatorname{Ric}(E_1,E_1)&=\operatorname{Ric}(E_2,E_2)=2\cdot\bigl(-\tfrac12\bigr)\cdot\tfrac32=-\tfrac32,\\
\operatorname{Ric}(E_3,E_3)&=2\cdot\bigl(-\tfrac12\bigr)^2=\tfrac12 .
\end{aligned}
\]

\medskip
Since $f^1\otimes f^1+f^2\otimes f^2=\pi^*g_{S_{a}}$ and $f^3=\eta$, we obtain
\[
\operatorname{Ric}^S=-\tfrac32\,\pi^*g_{S_{a}}+\tfrac12\,\eta\otimes\eta ,
\]
and, taking the trace,
\[
R^S=-\tfrac32-\tfrac32+\tfrac12=-\tfrac52 .
\]

\medskip
If $g^S$ were Einstein, there would be a constant $\rho$ with $\operatorname{Ric}^S=\rho\,g^S$, hence
$\operatorname{Ric}(E_1,E_1)=\operatorname{Ric}(E_3,E_3)$. This is false, since $-\tfrac32\neq\tfrac12$. Finally, in dimension $3$ the Einstein condition is equivalent to constant sectional curvature, so $g^S$ does not have constant sectional curvature either.
\end{proof}
\subsection{Non-existence of Ricci solitons}

Recall that $(g^S,X,\lambda)$ is a Ricci soliton if $X$ is a smooth vector field and $\lambda\in\mathbb{R}$ satisfy
\begin{equation}\label{RS}
\operatorname{Ric}^S+\tfrac12\lambda\mathcal{L}_Xg^S=\lambda g^S .
\end{equation}
(The cases $\lambda<0$, $=0$, $>0$ are the shrinking, steady and expanding cases respectively, depending on the convention; any other normalization, for instance $\operatorname{Ric}+\lambda\mathcal{L}_Xg=\lambda g$, reduces to \eqref{RS} by replacing $X$ with $2X$.)

\begin{theorem}\label{thm:RS}
There is no Ricci soliton on $(T^1S_a,g^S)$: for every smooth vector field $X$ on $T^1S_a$ and every $\lambda\in\mathbb{R}$, equation \eqref{RS} has no solution. The same holds on $\pi^{-1}(U)$ for every open set $U\subset S_a$. In particular, there are no gradient, steady, shrinking or expanding Ricci solitons.
\end{theorem}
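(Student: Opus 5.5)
The plan is to reduce the problem to the base strip by averaging over the circle fibres. Then I would compare components of the soliton equation in the splitting $g^S=\pi^*g_{S_a}+\eta\otimes\eta$, and finish with a curvature argument on $S_a$ like the one in Theorem~\ref{thm:soliton2}.

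\emph{Step 1 (normalization).} In \eqref{RS}, if $\lambda=0$ the equation reads $\operatorname{Ric}^S=0$, which contradicts Theorem~\ref{thm:einstein}. If $\lambda\neq0$, I replace $X$ by $\lambda X$, which turns the equation into the standard form $\operatorname{Ric}^S+\tfrac12\mathcal{L}_Xg^S=\lambda g^S$. Using Theorem~\ref{thm:einstein}, this is equivalent to
\[
\mathcal{L}_Xg^S=(2\lambda+3)\,g^S-4\,\eta\otimes\eta=(2\lambda+3)\,\pi^*g_{S_a}+(2\lambda-1)\,\eta\otimes\eta .
\]

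\emph{Step 2 (averaging).} Let $\phi_t$ be the flow of $\partial_\theta$. It is $2\pi$-periodic, and it is an isometry by Theorem~\ref{thm:killing_sasaki}. It also preserves $\eta$, because $\eta=g^S(\partial_\theta,\cdot)$ and $\partial_\theta=E_3$ is Killing. Hence $\phi_t^*$ fixes the right-hand side above, and $\phi_t^*\mathcal{L}_Xg^S=\mathcal{L}_{\phi_t^*X}g^S$. Since the equation is linear in $X$, the average $\bar X=\frac1{2\pi}\int_0^{2\pi}\phi_t^*X\,dt$ is again a solution, and it is $\theta$-invariant. This step works equally on $\pi^{-1}(U)$, since the fibres are entire circles.

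\emph{Step 3 (components).} Apply Lemma~\ref{lem:Lie} to $\bar X$, with $V=\pi_*\bar X$ and $\beta$ the associated basic $1$-form. This gives
\[
\pi^*(\mathcal{L}_Vg_{S_a})+\eta\otimes\pi^*\beta+\pi^*\beta\otimes\eta=(2\lambda+3)\pi^*g_{S_a}+(2\lambda-1)\eta\otimes\eta .
\]
Evaluating on $(E_3,E_3)$, all basic forms vanish on $E_3$, so the left side is $0$ and therefore $\lambda=\tfrac12$. Evaluating on $(E_i,E_3)$ with $i=1,2$ gives $\beta=0$. The horizontal part then gives $\mathcal{L}_Vg_{S_a}=4\,g_{S_a}$ on $U$, so $V$ is a non-trivial homothetic field on a surface of curvature $-1$.

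\emph{Step 4 (contradiction).} This is the only genuine geometric input, and I expect it to be the main obstacle, specifically in keeping the argument local on $U$. If $\mathcal{L}_Vg=2cg$ with $c$ constant, the local flow satisfies $\varphi_t^*g=e^{2ct}g$. The curvature therefore obeys $K\circ\varphi_t=e^{-2ct}K$, or infinitesimally $V(K)=-2cK$. With $K\equiv-1$ (Theorem~\ref{thm:curvature}) this forces $c=0$, contradicting $c=2$. Hence no Ricci soliton exists on $\pi^{-1}(U)$ for any open $U\subset S_a$, and in particular none on $T^1S_a$. The gradient, steady, shrinking and expanding cases are special cases of this.
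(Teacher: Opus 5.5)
Your proposal is correct and follows the paper's proof essentially step for step. You average over the $\partial_\theta$-circle action to get a $\theta$-invariant solution, split $\mathcal{L}_{\bar X}g^S$ via Lemma~\ref{lem:Lie} to obtain $\lambda=\tfrac12$, $\beta=0$ and $\mathcal{L}_Vg_{S_a}=4\,g_{S_a}$, and then rule out a proper homothetic field on a surface of constant curvature $-1$. Your Step~1 (treating $\lambda=0$ separately and rescaling $X$ to pass from the literal form of \eqref{RS} to the standard one) and your infinitesimal identity $V(K)=-2cK$, used in place of the paper's integrated flow argument, are only cosmetic refinements.
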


\begin{proof}
By Theorem~\ref{thm:einstein}, equation \eqref{RS} reads
\begin{equation}\label{RS2}
\mathcal{L}_Xg^S=h,
\end{equation}
where
\[
\begin{aligned}
h:=2\lambda\, g^S-2\operatorname{Ric}^S
&=(2\lambda+3)\,\pi^*g_{S_{a}}+(2\lambda-1)\,\eta\otimes\eta\\
&=\kappa\, g^S-4\,\eta\otimes\eta,
\qquad \kappa:=2\lambda+3 .
\end{aligned}
\]

\medskip
For $s\in\mathbb{R}/2\pi\mathbb{Z}$, let
\[
\psi_s(x,y,\theta)=(x,y,\theta+s).
\]
The coefficients of $g^S$ and $\eta$ do not depend on $\theta$, so $\psi_s^*g^S=g^S$ and $\psi_s^*\eta=\eta$; consequently $\psi_{s*}h=h$. For every vector field $X$,
\[
\mathcal{L}_{\psi_{s*}X}\,g^S=\psi_{s*}\bigl(\mathcal{L}_X(\psi_s^*g^S)\bigr)=\psi_{s*}\bigl(\mathcal{L}_Xg^S\bigr).
\]
Suppose $X$ satisfies \eqref{RS2} and put
\[
\bar X=\frac{1}{2\pi}\int_0^{2\pi}\psi_{s*}X\,ds .
\]
This field is smooth (the circle is compact) and $\theta$-invariant. Since $X\mapsto\mathcal{L}_Xg^S$ is linear,
\[
\mathcal{L}_{\bar X}g^S=\frac{1}{2\pi}\int_0^{2\pi}\psi_{s*}\bigl(\mathcal{L}_Xg^S\bigr)\,ds=\frac{1}{2\pi}\int_0^{2\pi}\psi_{s*}h\,ds=h .
\]
Hence we may assume that $X$ is $\theta$-invariant.

\medskip
With the notation of Lemma~\ref{lem:Lie}, equation \eqref{RS2} becomes
\[
\pi^*\bigl(\mathcal{L}_Vg_{S_{a}}-\kappa\, g_{S_{a}}\bigr)
+\eta\otimes\pi^*\beta+\pi^*\beta\otimes\eta
-(\kappa-4)\,\eta\otimes\eta=0 .
\]
Evaluate this on the orthonormal frame $(e_1,e_2,e_3)$ dual to $(e^1,e^2,e^3)$:
\[
\begin{aligned}
(a,b),\ a,b\in\{1,2\}:&\quad \mathcal{L}_Vg_{S_{a}}=\kappa\, g_{S_{a}},\\
(a,3),\ a\in\{1,2\}:&\quad \beta=0,\\
(3,3):&\quad \kappa=4 .
\end{aligned}
\]
Therefore
\[
\mathcal{L}_Vg_{S_{a}}=4\,g_{S_{a}}.
\]

\medskip
Let $\varphi_t$ be the local flow of $V$ and set $\Phi(t)=\varphi_t^*g_{S_{a}}$. Then
\[
\Phi'(t)=\varphi_t^*\bigl(\mathcal{L}_Vg_{S_{a}}\bigr)=4\,\Phi(t),
\]
so $\Phi(t)=e^{4t}g_{S_{a}}$, and the Gauss curvature of $\Phi(t)$ is $-e^{-4t}$. On the other hand, $\varphi_t$ is a local diffeomorphism, so the curvature of $\varphi_t^*g_{S_{a}}$ at a point $p$ equals the curvature of $g_{S_{a}}$ at $\varphi_t(p)$, namely $-1$. Thus $e^{-4t}=1$ for all small $t$, which is absurd.
\end{proof}

\subsection{Hyperbolic Ricci Solitons}
We use the definition of Fasihi-Ramandi--Azami--Choudhary \cite{FRAC}: a \emph{hyperbolic Ricci soliton} is a quadruple $(g,X,\lambda,\mu)$, where $X$ is a smooth vector field and $\lambda,\mu\in\mathbb{R}$, such that
\begin{equation}\label{HRS}
\operatorname{Ric}+\lambda\,\mathcal{L}_Xg+\tfrac12\,\mathcal{L}_X\bigl(\mathcal{L}_Xg\bigr)=\mu\, g .
\end{equation}

\begin{proposition}\label{prop:killing}
If $X$ is a Killing field, then \eqref{HRS} is equivalent to $\operatorname{Ric}=\mu g$. Hence no Killing field of $(T^1S_a,g^S)$, in particular no element of $\mathrm{span}\{K_0,K_1,K_2,K_3\}$, is the potential of a hyperbolic Ricci soliton.
\end{proposition}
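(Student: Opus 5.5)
The plan is to substitute the Killing condition directly into \eqref{HRS} and then compare with Theorem~\ref{thm:einstein}.

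First, if $X$ is Killing then $\mathcal{L}_Xg=0$. Since $\mathcal{L}_X$ is linear, also $\mathcal{L}_X(\mathcal{L}_Xg)=\mathcal{L}_X(0)=0$. Hence both potential terms in \eqref{HRS} vanish, whatever the value of $\lambda$. The equation therefore reduces to $\operatorname{Ric}=\mu g$. Conversely, if $\operatorname{Ric}=\mu g$ and $X$ is Killing, then \eqref{HRS} clearly holds. This proves the claimed equivalence, exactly as in Proposition~\ref{prop:einstein_HRS}.

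Second, we apply this to $(T^1S_a,g^S)$ with $X$ any Killing field, that is, any element of the four-dimensional algebra of Theorem~\ref{thm:killing_sasaki}, including $0$ and $K_0=\partial_\theta$. A hyperbolic Ricci soliton with such a potential would force $\operatorname{Ric}^S=\mu g^S$ for some constant $\mu$. To rule this out, evaluate on the orthonormal frame $(E_1,E_2,E_3)$ of Lemma~\ref{lem:frame}. Theorem~\ref{thm:einstein} gives $\operatorname{Ric}^S(E_1,E_1)=-\tfrac32$ and $\operatorname{Ric}^S(E_3,E_3)=\tfrac12$. Einstein would instead require both values to equal $\mu$, and $-\tfrac32\neq\tfrac12$, so this is impossible. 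The same pointwise contradiction holds on any open subset, so the conclusion is local as well as global.

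There is no real obstacle here. The only point needing care is that the argument must not depend on the explicit form of the Killing fields: the equivalence uses only $\mathcal{L}_Xg^S=0$, so it covers every element of $\mathrm{span}\{K_0,K_1,K_2,K_3\}$ and every $\lambda\in\mathbb{R}$ at once.
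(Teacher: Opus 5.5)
Your proposal is correct and follows the paper's proof: the Killing condition makes both Lie-derivative terms vanish, so \eqref{HRS} reduces to $\operatorname{Ric}=\mu g$, and this is ruled out by Theorem~\ref{thm:einstein} since $\operatorname{Ric}^S(E_1,E_1)=-\tfrac32\neq\tfrac12=\operatorname{Ric}^S(E_3,E_3)$. Your explicit converse and your remark that the argument uses only $\mathcal{L}_Xg^S=0$ (and so holds locally) are harmless additions.
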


\begin{proof}
If $\mathcal{L}_Xg=0$, then $\mathcal{L}_X(\mathcal{L}_Xg)=0$, and \eqref{HRS} reduces to $\operatorname{Ric}=\mu g$. This is impossible by Theorem~\ref{thm:einstein}.
\end{proof}

\begin{proposition}\label{prop:left}
There is no hyperbolic Ricci soliton whose potential is $X=x^1E_1+x^2E_2+x^3E_3$ with constants $x^i\in\mathbb{R}$.
\end{proposition}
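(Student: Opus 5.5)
Since $X$ has constant coefficients in the frame $(E_1,E_2,E_3)$ of Lemma~\ref{lem:frame}, everything reduces to linear algebra on the structure constants \eqref{brackets}. For any tensor $T$ whose components $T(E_i,E_j)$ are constant, the Leibniz rule gives
\[
(\mathcal{L}_XT)(E_i,E_j)=-T([X,E_i],E_j)-T(E_i,[X,E_j]).
\]
I will apply this twice: first to $T=g^S$, whose components are $\delta_{ij}$, to get $h:=\mathcal{L}_Xg^S$. Then I apply it to $T=h$, which again has constant components, to get $\mathcal{L}_Xh$. By Theorem~\ref{thm:einstein}, $\operatorname{Ric}^S=\mathrm{diag}(-\tfrac32,-\tfrac32,\tfrac12)$ in this frame. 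So \eqref{HRS} becomes a finite system of polynomial equations in $x^1,x^2,x^3,\lambda,\mu$.

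\textbf{Step 1.} From \eqref{brackets} I compute $\operatorname{ad}_X$:
\[
[X,E_1]=x^3E_2+x^2E_3,\qquad [X,E_2]=-x^3E_1-x^1E_3,\qquad [X,E_3]=x^2E_1-x^1E_2.
\]
This gives $h(E_i,E_i)=0$ for all $i$ and $h(E_1,E_2)=0$. The only nonzero entries are
\[
h(E_1,E_3)=-2x^2,\qquad h(E_2,E_3)=2x^1 .
\]

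\textbf{Step 2.} I only need the diagonal of $\mathcal{L}_Xh$, because $h$ has no diagonal part and $\operatorname{Ric}^S$ and $g^S$ are diagonal. On the diagonal, $(\mathcal{L}_Xh)(E_i,E_i)=-2h([X,E_i],E_i)$. This yields
\[
(\mathcal{L}_Xh)(E_1,E_1)=4(x^2)^2,\quad (\mathcal{L}_Xh)(E_2,E_2)=4(x^1)^2,\quad (\mathcal{L}_Xh)(E_3,E_3)=4\bigl((x^1)^2+(x^2)^2\bigr).
\]

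\textbf{Step 3.} The diagonal components of \eqref{HRS} are therefore
\[
-\tfrac32+2(x^2)^2=\mu,\qquad -\tfrac32+2(x^1)^2=\mu,\qquad \tfrac12+2\bigl((x^1)^2+(x^2)^2\bigr)=\mu .
\]
The parameter $\lambda$ does not enter these equations. Subtracting the first equation from the third gives $2+2(x^1)^2=0$, which has no real solution. Hence no choice of constants $x^i$, $\lambda$, $\mu$ works, and the off-diagonal equations are not even needed.

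The main point to get right is the sign bookkeeping in $\operatorname{ad}_X$ and in the constant-coefficient Lie derivative formula. The key structural fact is that the $(E_3,E_3)$ entry receives the sum of the two positive contributions that feed the $(E_1,E_1)$ and $(E_2,E_2)$ entries. Combined with the Ricci gap $\tfrac12-(-\tfrac32)=2>0$, this produces the contradiction. I will double-check the computation against the Killing case $x^1=x^2=0$: there $X=x^3E_3$ is a multiple of $K_0$, so $h$ must vanish, consistent with Proposition~\ref{prop:killing}.
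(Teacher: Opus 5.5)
Your proposal is correct and is essentially the paper's argument: your frame computation via $\operatorname{ad}_X$ is the dual of the paper's coframe computation $\mathcal{L}_Xf^k=\iota_X df^k$. It produces the same $\mathcal{L}_Xg^S=2(\gamma\otimes f^3+f^3\otimes\gamma)$ and the same diagonal entries $2(x^2)^2,\ 2(x^1)^2,\ 2\bigl((x^1)^2+(x^2)^2\bigr)$ of $\tfrac12\mathcal{L}_X(\mathcal{L}_Xg^S)$. The only difference is the finish: you subtract the $(1,1)$ equation from the $(3,3)$ equation to get $2+2(x^1)^2=0$ directly, whereas the paper first uses a rank argument on the $2\times 2$ block to force $\mu=-\tfrac32$ and $\gamma=0$, and then contradicts the $(3,3)$ equation; your ending is slightly shorter but equivalent.
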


\begin{proof}
Since the $x^i$ are constant, $\mathcal{L}_Xf^k=\iota_X\,df^k$. By \eqref{struct},
\[
\begin{aligned}
\mathcal{L}_Xf^1&=x^3f^2-x^2f^3,\\
\mathcal{L}_Xf^2&=x^1f^3-x^3f^1,\\
\mathcal{L}_Xf^3&=\gamma,
\end{aligned}
\qquad \gamma:=x^1f^2-x^2f^1 .
\]

\medskip
We have
\[
\mathcal{L}_Xg^S=\sum_{k=1}^3\bigl(\mathcal{L}_Xf^k\otimes f^k+f^k\otimes\mathcal{L}_Xf^k\bigr).
\]
The terms in $x^3$ cancel in pairs, and what remains is
\[
T:=\mathcal{L}_Xg^S=2\,\bigl(\gamma\otimes f^3+f^3\otimes\gamma\bigr).
\]
Next, $\mathcal{L}_X\gamma=x^1\mathcal{L}_Xf^2-x^2\mathcal{L}_Xf^1=\rho\, f^3-x^3\delta$, where
\[
\rho=(x^1)^2+(x^2)^2,\qquad \delta=x^1f^1+x^2f^2 .
\]
Using also $\mathcal{L}_Xf^3=\gamma$, we get
\[
\tfrac12\,\mathcal{L}_XT
=2\rho\,f^3\otimes f^3-x^3\bigl(\delta\otimes f^3+f^3\otimes\delta\bigr)+2\,\gamma\otimes\gamma .
\]

\medskip With
\[
\operatorname{Ric}^S=-\tfrac32\bigl(f^1\otimes f^1+f^2\otimes f^2\bigr)+\tfrac12\,f^3\otimes f^3,
\]
equation \eqref{HRS} gives, component by component,
\[
\begin{aligned}
(3,3):&\quad \tfrac12+2\rho=\mu,\\
(a,3),\ a=1,2:&\quad 2\lambda\gamma-x^3\delta=0,\\
(a,b),\ a,b\in\{1,2\}:&\quad 2\,\gamma\otimes\gamma=\bigl(\mu+\tfrac32\bigr)\,\mathrm{Id}.
\end{aligned}
\]

\medskip
In the last equation, the left-hand side has rank at most $1$, while the right-hand side has rank $2$ unless $\mu=-\tfrac32$. In that case $\gamma=0$, that is $x^1=x^2=0$ and $\rho=0$. But then the first equation gives $\mu=\tfrac12\neq-\tfrac32$, a contradiction.
\end{proof}

\begin{theorem}\label{thm:HRS}
Let $X$ be a $\theta$-invariant vector field, with $V,F,\beta$ as above, and put $S:=\mathcal{L}_Vg_{S_{a}}$. Then $(g^S,X,\lambda,\mu)$ is a hyperbolic Ricci soliton if and only if
\begin{align}
&\mu=\tfrac12, \label{E0}\\
&\lambda\,S+\tfrac12\,\mathcal{L}_VS+\beta\otimes\beta=2\,g_{S_{a}}, \label{E1}\\
&\mathcal{L}_V\beta=-2\lambda\,\beta . \label{E2}
\end{align}
Consequently, such a soliton is necessarily expanding with $\mu=\tfrac12$, and $V$ is neither zero, nor Killing, nor conformal for $g_{S_{a}}$.
\end{theorem}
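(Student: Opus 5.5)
The plan is to compute each term of \eqref{HRS} for $g=g^S$ using Lemma~\ref{lem:Lie}, then split the resulting tensor identity into horizontal--horizontal, mixed and vertical--vertical parts. In the frame $(e_1,e_2,e_3)$ these parts are independent, because $\pi^*g_{S_a}$ and $\eta$ span complementary blocks.

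\textbf{Step 1: the two Lie derivatives.} By Lemma~\ref{lem:Lie},
\[
\mathcal{L}_Xg^S=\pi^*S+\eta\otimes\pi^*\beta+\pi^*\beta\otimes\eta .
\]
I apply $\mathcal{L}_X$ once more, using the Leibniz rule, the identity $\mathcal{L}_X\pi^*\tau=\pi^*\mathcal{L}_V\tau$ (for $\tau=S$ and $\tau=\beta$), and $\mathcal{L}_X\eta=\pi^*\beta$. This gives
\[
\mathcal{L}_X(\mathcal{L}_Xg^S)=\pi^*\bigl(\mathcal{L}_VS+2\beta\otimes\beta\bigr)+\eta\otimes\pi^*\mathcal{L}_V\beta+\pi^*\mathcal{L}_V\beta\otimes\eta .
\]

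\textbf{Step 2: splitting the equation.} Insert these expressions, together with $\operatorname{Ric}^S=-\tfrac32\pi^*g_{S_a}+\tfrac12\eta\otimes\eta$ from Theorem~\ref{thm:einstein} and $g^S=\pi^*g_{S_a}+\eta\otimes\eta$, into \eqref{HRS}. Then evaluate on $(e_1,e_2,e_3)$, exactly as in the proof of Theorem~\ref{thm:RS}.
\begin{itemize}
\item The $(3,3)$ component gives $\tfrac12=\mu$, which is \eqref{E0}.
\item The $(a,3)$ components, $a=1,2$, give $\lambda\beta+\tfrac12\mathcal{L}_V\beta=0$, which is \eqref{E2}.
\item The $(a,b)$ components, $a,b\in\{1,2\}$, give $-\tfrac32g_{S_a}+\lambda S+\tfrac12\mathcal{L}_VS+\beta\otimes\beta=\mu g_{S_a}$. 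With $\mu=\tfrac12$ this is \eqref{E1}.
\end{itemize}
Since every step is reversible, the equivalence follows. Moreover $\mu=\tfrac12>0$, which is the expanding case under the sign convention used here.

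\textbf{Step 3: restrictions on $V$.} All three exclusions reduce to the conformal case, since $V=0$ and Killing $V$ are conformal with factor $0$. Suppose $S=2\sigma g_{S_a}$ for some function $\sigma$. Then
\[
\mathcal{L}_VS=2V(\sigma)g_{S_a}+2\sigma S=2\bigl(V(\sigma)+2\sigma^2\bigr)g_{S_a},
\]
so \eqref{E1} becomes
\[
\beta\otimes\beta=\bigl(2-2\lambda\sigma-V(\sigma)-2\sigma^2\bigr)g_{S_a}.
\]
The left side has rank at most one and the right side is a multiple of the rank-two metric, so both sides vanish. Hence $\beta=0$ and
\[
2\lambda\sigma+V(\sigma)+2\sigma^2=2 .
\]
Now $\beta=0$ gives $0=d\beta=\operatorname{div}(V)\,\mathrm{vol}_B$. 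In dimension two, $\operatorname{div}V=2\sigma$ for a conformal field, so $\sigma\equiv0$. The last equation then reads $0=2$, a contradiction.

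The only delicate point is Step~1: one must check that no cross terms are lost when differentiating $\eta\otimes\pi^*\beta$. Differentiating the $\eta$ factor produces $\pi^*\beta\otimes\pi^*\beta$ from each of the two mixed terms, which accounts for the factor $2$ in front of $\beta\otimes\beta$. After halving, this becomes the coefficient $1$ in \eqref{E1}.
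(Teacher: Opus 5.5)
Your proposal is correct and follows the paper's proof essentially step for step. You use the same second Lie derivative $\mathcal{L}_X(\mathcal{L}_Xg^S)=\pi^*(\mathcal{L}_VS+2\beta\otimes\beta)+\eta\otimes\mathcal{L}_V\beta+\mathcal{L}_V\beta\otimes\eta$, the same $(3,3)$, $(a,3)$, $(a,b)$ splitting, and the same rank-one argument combined with $d\beta=\operatorname{div}(V)\,\mathrm{vol}_B$; the only difference, which is harmless, is that you treat $V=0$ and Killing $V$ as the conformal case with $\sigma\equiv0$ rather than separately.
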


\begin{proof}
By Lemma~\ref{lem:Lie},
\[
T:=\mathcal{L}_Xg^S=\pi^*S+\eta\otimes\beta+\beta\otimes\eta
\]
(we omit $\pi^*$ in front of $\beta$). Since $\mathcal{L}_X\eta=\beta$, $\mathcal{L}_X\beta=\mathcal{L}_V\beta$ and $\mathcal{L}_X\pi^*S=\pi^*\mathcal{L}_VS$, we get
\[
\mathcal{L}_XT=\pi^*\mathcal{L}_VS+2\,\beta\otimes\beta+\eta\otimes\mathcal{L}_V\beta+\mathcal{L}_V\beta\otimes\eta .
\]

\medskip
With $\operatorname{Ric}^S=-\tfrac32\,\pi^*g_{S_{a}}+\tfrac12\,\eta\otimes\eta$, equation \eqref{HRS} reads
\[
\begin{aligned}
&\pi^*\Bigl(-\tfrac32\,g_{S_{a}}+\lambda S+\tfrac12\,\mathcal{L}_VS+\beta\otimes\beta-\mu\, g_{S_{a}}\Bigr)\\
&\quad+\eta\otimes\Bigl(\lambda\beta+\tfrac12\,\mathcal{L}_V\beta\Bigr)+\Bigl(\lambda\beta+\tfrac12\,\mathcal{L}_V\beta\Bigr)\otimes\eta\\
&\quad+\Bigl(\tfrac12-\mu\Bigr)\,\eta\otimes\eta=0 .
\end{aligned}
\]
Setting the $(a,b)$, $(a,3)$ and $(3,3)$ components in the coframe $(e^i)$ to zero separately, we obtain respectively \eqref{E1} (using $\mu=\tfrac12$, so that $-\tfrac32-\mu=-2$), \eqref{E2} and \eqref{E0}.

\medskip
If $V=0$ or if $V$ is Killing, then $S=0$ and $\mathcal{L}_VS=0$, so \eqref{E1} gives $\beta\otimes\beta=2\,g_{S_{a}}$. This is impossible, since the left-hand side has rank at most $1$ and the right-hand side has rank $2$.

If $V$ is conformal, then $S=\varphi\, g_{S_{a}}$ with $\varphi=\operatorname{div}V$, and $\mathcal{L}_VS=(V\varphi+\varphi^2)\,g_{S_{a}}$. Equation \eqref{E1} gives
\[
\beta\otimes\beta=\Bigl(2-\lambda\varphi-\tfrac12\bigl(V\varphi+\varphi^2\bigr)\Bigr)\,g_{S_{a}},
\]
which forces $\beta=0$ for the same rank reason. Then $0=d\beta=\varphi\,\mathrm{vol}_B$, so $\varphi=0$, i.e.\ $V$ is Killing, and \eqref{E1} reduces to $0=2\,g_{S_{a}}$: absurd.
\end{proof}

\begin{remark}
The results above show that a hyperbolic Ricci soliton on $(T^1S_a,g^S)$ cannot have as potential a Killing field, a left-invariant field, or a $\theta$-invariant field whose projection is conformal. For $\theta$-invariant potentials, the problem is reduced to the system \eqref{E1}--\eqref{E2} on the hyperbolic surface $S_a$. Condition \eqref{E1} says that the tensor
\[
2\,g_{S_{a}}-\lambda S-\tfrac12\,\mathcal{L}_VS
\]
has rank at most $1$ and equals $\beta\otimes\beta$, where $d\beta=\operatorname{div}(V)\,\mathrm{vol}_B$. The complete classification for an arbitrary potential $X$ (not $\theta$-invariant) is not treated here: since equation \eqref{HRS} is nonlinear in $X$, the averaging argument from the proof of Theorem~\ref{thm:RS} does not apply.
\end{remark}
\section{$\eta-$Ricci Solitons}

\begin{theorem}\label{thm:etaRS}
Let $\eta=d\theta+c\,dx$ be the contact form of $(T^1S_a,g^S)$. Then $(g^S,X,\lambda,\mu)$ is an $\eta$-Ricci soliton if and only if
\[
\lambda=\tfrac32,\qquad \mu=-2,\qquad X\in\mathfrak{Kill}(T^1S_a,g^S)=\operatorname{span}_{\mathbb{R}}\{K_0,K_1,K_2,K_3\}.
\]
In particular, the constants $(\lambda,\mu)$ are uniquely determined, the soliton is expanding, and every potential field is a Killing field. There is no $\eta$-Ricci soliton with a non-Killing potential.
\end{theorem}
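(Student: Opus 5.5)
Substituting $\operatorname{Ric}^S=-\tfrac32\,g^S+2\,\eta\otimes\eta$ from Theorem~\ref{thm:einstein} into \eqref{etaRS} turns the $\eta$-Ricci soliton equation into
\[
\mathcal{L}_Xg^S=(3-2\lambda)\,g^S-(4+2\mu)\,\eta\otimes\eta=:h .
\]
This equation is linear in $X$, and $h$ is invariant under the circle action $\psi_s(x,y,\theta)=(x,y,\theta+s)$, because $\psi_s^*g^S=g^S$ and $\psi_s^*\eta=\eta$. So the averaging argument from the proof of Theorem~\ref{thm:RS} applies verbatim. If $X$ is a solution, then so is $\bar X=\frac{1}{2\pi}\int_0^{2\pi}\psi_{s*}X\,ds$, and $\bar X$ is $\theta$-invariant. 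The first step is therefore to determine $(\lambda,\mu)$ using a $\theta$-invariant solution.

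For $\theta$-invariant $\bar X$ with data $V,F,\beta$, Lemma~\ref{lem:Lie} gives
\[
\mathcal{L}_{\bar X}g^S=\pi^*(\mathcal{L}_Vg_{S_a})+\eta\otimes\beta+\beta\otimes\eta .
\]
Write $h=(3-2\lambda)\pi^*g_{S_a}+(-1-2\lambda-2\mu)\,\eta\otimes\eta$ and compare components in the coframe $(e^1,e^2,e^3)$:
\begin{itemize}
\item the $(3,3)$ component gives $-1-2\lambda-2\mu=0$;
\item the $(a,3)$ components give $\beta=0$;
\item the $(a,b)$ components give $\mathcal{L}_Vg_{S_a}=(3-2\lambda)g_{S_a}$.
\end{itemize}
Exactly as at the end of the proof of Theorem~\ref{thm:RS}, the flow of $V$ satisfies $\varphi_t^*g_{S_a}=e^{(3-2\lambda)t}g_{S_a}$. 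Comparing Gauss curvatures (they must both equal $-1$) forces $3-2\lambda=0$. Hence $\lambda=\tfrac32$, and then $\mu=-2$.

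With these values we have $h=0$, so the original equation for the original (not averaged) $X$ reads $\mathcal{L}_Xg^S=0$, i.e. $X$ is Killing. By Theorem~\ref{thm:killing_sasaki}, $X$ then lies in the stated four-dimensional span; the paper's $K_0,\dots,K_3$ are the same fields relabelled. Conversely, any Killing $X$ together with $\lambda=\tfrac32$, $\mu=-2$ satisfies \eqref{etaRS}, because $2\operatorname{Ric}^S+3g^S-4\,\eta\otimes\eta=0$. Since $\lambda=\tfrac32>0$, the soliton is expanding in the stated convention, and no non-Killing potential exists.

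The only delicate point is justifying that the constants obtained from the averaged field $\bar X$ are the constants for $X$ itself. This holds because $\lambda,\mu$ are shared by $X$ and $\bar X$: we solve for them using $\bar X$, and then return to the original $X$ through $h=0$. A secondary check is the rank/component bookkeeping: the $\eta\otimes\eta$ coefficient of $h$ must be computed after rewriting $g^S=\pi^*g_{S_a}+\eta\otimes\eta$.
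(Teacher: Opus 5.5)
Your proposal is correct and follows the paper's proof essentially step for step. You rewrite the equation as $\mathcal{L}_Xg^S=(3-2\lambda)g^S-(4+2\mu)\,\eta\otimes\eta$, average over the circle action to get a $\theta$-invariant solution, split it via the Lie-derivative lemma into $\mathcal{L}_Vg_{S_a}=(3-2\lambda)g_{S_a}$, $\beta=0$ and a vanishing $\eta\otimes\eta$ coefficient, eliminate the homothety constant by comparing curvatures, and conclude that $h=0$, so the original $X$ is Killing; the converse check is also the same as in the paper.
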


\begin{proof}
By Theorem~\ref{thm:einstein}, $2\operatorname{Ric}^S=-3\,g^S+4\,\eta\otimes\eta$. Hence \eqref{etaRS} is equivalent to
\begin{equation}\label{etaRS2}
\mathcal{L}_Xg^S=h,\qquad h:=p\,g^S+q\,\eta\otimes\eta,\qquad p:=3-2\lambda,\quad q:=-(4+2\mu).
\end{equation}

\medskip
Let $\psi_s(x,y,\theta)=(x,y,\theta+s)$. As in the proof of Theorem~\ref{thm:RS}, $\psi_s^*g^S=g^S$ and $\psi_s^*\eta=\eta$, so that $\psi_{s*}h=h$ and
\[
\mathcal{L}_{\psi_{s*}X}\,g^S=\psi_{s*}\bigl(\mathcal{L}_Xg^S\bigr).
\]
If $X$ satisfies \eqref{etaRS2}, the field
\[
\bar X=\frac{1}{2\pi}\int_0^{2\pi}\psi_{s*}X\,ds
\]
is smooth and $\theta$-invariant, and by linearity of $X\mapsto\mathcal{L}_Xg^S$ it satisfies $\mathcal{L}_{\bar X}g^S=h$.

\medskip
Apply Lemma~\ref{lem:Lie} to $\bar X$, with $V=\pi_*\bar X$ and $\beta$ as there. Since $g^S=\pi^*g_{S_{a}}+\eta\otimes\eta$, equation \eqref{etaRS2} reads
\[
\pi^*\bigl(\mathcal{L}_Vg_{S_{a}}\bigr)+\eta\otimes\pi^*\beta+\pi^*\beta\otimes\eta
=p\,\pi^*g_{S_{a}}+(p+q)\,\eta\otimes\eta .
\]
Evaluating on the orthonormal frame $(e_1,e_2,e_3)$ dual to $(e^1,e^2,e^3)$:
\[
\begin{aligned}
(a,b),\ a,b\in\{1,2\}:&\quad \mathcal{L}_Vg_{S_{a}}=p\,g_{S_{a}},\\
(a,3),\ a\in\{1,2\}:&\quad \beta=0,\\
(3,3):&\quad p+q=0 .
\end{aligned}
\]

\medskip
Let $\varphi_t$ be the local flow of $V$ and set $\Phi(t)=\varphi_t^*g_{S_{a}}$. Then
\[
\Phi'(t)=\varphi_t^*\bigl(\mathcal{L}_Vg_{S_{a}}\bigr)=p\,\Phi(t),
\]
so $\Phi(t)=e^{pt}g_{S_{a}}$, whose Gauss curvature is $-e^{-pt}$. On the other hand, $\varphi_t$ is a local diffeomorphism and $g_{S_{a}}$ has Gauss curvature $-1$, so the curvature of $\Phi(t)$ equals $-1$. Thus $e^{-pt}=1$ for all small $t$, hence $p=0$, and then $q=-p=0$.

\medskip
From $p=q=0$ we get $3-2\lambda=0$ and $4+2\mu=0$, that is,
\[
\lambda=\tfrac32,\qquad \mu=-2 .
\]
Moreover $h=0$, so the original field $X$ satisfies $\mathcal{L}_Xg^S=0$: it is a Killing field, i.e.\ $X\in\operatorname{span}_{\mathbb{R}}\{K_0,K_1,K_2,K_3\}$ by the structure theorem for $\mathfrak{Kill}(T^1S_a,g^S)$.

\medskip
Let $X$ be a Killing field, $\lambda=\tfrac32$ and $\mu=-2$. Then
\[
\begin{aligned}
\mathcal{L}_Xg^S+2\operatorname{Ric}^S+2\lambda\, g^S+2\mu\,\eta\otimes\eta
&=0+\bigl(-3\,g^S+4\,\eta\otimes\eta\bigr)+3\,g^S-4\,\eta\otimes\eta\\
&=0 .
\end{aligned}
\]
\end{proof}
\section{$\eta-$hyperbolic Ricci Solitons}
In this section we will define the notion of $\eta-$ Hyperbolic Ricci Soliton for the manifold of dimension $2n+1.$ And we will describe the $\eta-$hyperbolic Ricci Soliton for the Sasaki metric.

\begin{definition}[$\eta$-hyperbolic Ricci soliton]\label{def:etaHRS}
Let $(M,g)$ be a Riemannian manifold and $\eta$ a $1$-form on $M$. A quintuple $(g,X,\lambda,\mu,\nu)$, where $X$ is a smooth vector field and $\lambda,\mu,\nu\in\mathbb{R}$, is called an \emph{$\eta$-hyperbolic Ricci soliton} if
\begin{equation}\label{etaHRS}
\operatorname{Ric}+\lambda\,\mathcal{L}_Xg+\tfrac12\,\mathcal{L}_X\bigl(\mathcal{L}_Xg\bigr)=\mu\,g+\nu\,\eta\otimes\eta .
\end{equation}
For $\nu=0$ one recovers the hyperbolic Ricci solitons of \cite{FRAC}. Formally, dropping the second-order term and taking $\lambda=\tfrac12$ gives the $\eta$-Ricci soliton equation of Cho--Kimura \cite{ChoKimura}, written as $\operatorname{Ric}+\tfrac12\mathcal{L}_Xg=\mu g+\nu\,\eta\otimes\eta$. If $X$ is a Killing field, \eqref{etaHRS} reduces to $\operatorname{Ric}=\mu g+\nu\,\eta\otimes\eta$, i.e.\ to the $\eta$-Einstein condition; such solitons are called \emph{trivial}.
\end{definition}
In what follows, $\eta=d\theta+c\,dx$ is the contact form of $(T^1S_a,g^S)$, and for a $\theta$-invariant field $X$ we use $V,F,\beta$ and $S:=\mathcal{L}_Vg_{S_{a}}$ as in Lemma~\ref{lem:Lie}.

\begin{theorem}[$\eta$-hyperbolic Ricci solitons of the Sasaki metric]\label{thm:etaHRS}
Let $(g^S,X,\lambda,\mu,\nu)$ be given.
\begin{enumerate}
\item[\textup{(a)}] If $X$ is a Killing field, then $(g^S,X,\lambda,\mu,\nu)$ is an $\eta$-hyperbolic Ricci soliton if and only if
\[
\mu=-\tfrac32,\qquad \nu=2,\qquad \lambda\in\mathbb{R}\ \text{arbitrary}.
\]
\item[\textup{(b)}] Let $X=x^1E_1+x^2E_2+x^3E_3$ with constants $x^i\in\mathbb{R}$. Then $(g^S,X,\lambda,\mu,\nu)$ is an $\eta$-hyperbolic Ricci soliton if and only if
\[
x^1=x^2=0\ (\text{so } X=x^3K_0),\qquad \mu=-\tfrac32,\qquad \nu=2 .
\]
\item[\textup{(c)}] Let $X$ be $\theta$-invariant. Then $(g^S,X,\lambda,\mu,\nu)$ is an $\eta$-hyperbolic Ricci soliton if and only if
\begin{align}
&\mu+\nu=\tfrac12, \label{R0}\\
&\lambda\,S+\tfrac12\,\mathcal{L}_VS+\beta\otimes\beta=\bigl(\mu+\tfrac32\bigr)\,g_{S_{a}}, \label{R1}\\
&\mathcal{L}_V\beta=-2\lambda\,\beta . \label{R2}
\end{align}
\item[\textup{(d)}] Let $X$ be $\theta$-invariant and assume that $V$ is zero, Killing, or conformal for $g_{S_{a}}$. If $(g^S,X,\lambda,\mu,\nu)$ is an $\eta$-hyperbolic Ricci soliton, then $X$ is a Killing field, $\mu=-\tfrac32$ and $\nu=2$.
\end{enumerate}
\end{theorem}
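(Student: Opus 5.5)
The plan is to reuse the computations already done for Proposition~\ref{prop:killing}, Proposition~\ref{prop:left} and Theorem~\ref{thm:HRS}, simply adding the term $\nu\,\eta\otimes\eta$ to the right-hand side. Throughout, we use $\operatorname{Ric}^S=-\tfrac32\,\pi^*g_{S_a}+\tfrac12\,\eta\otimes\eta=-\tfrac32 g^S+2\eta\otimes\eta$ from Theorem~\ref{thm:einstein}, and we split every equation into $(a,b)$, $(a,3)$ and $(3,3)$ components in the coframe $(f^i)$ or $(e^i)$. Since $f^1,f^2,f^3$ are linearly independent, each block must vanish separately.

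For (a), $\mathcal{L}_Xg^S=0$ kills both Lie-derivative terms, so \eqref{etaHRS} becomes $-\tfrac32 g^S+2\eta\otimes\eta=\mu g^S+\nu\eta\otimes\eta$. Comparing the $(1,1)$ and $(3,3)$ components gives $\mu=-\tfrac32$ and $\mu+\nu=\tfrac12$, hence $\nu=2$. Conversely, these values clearly work, and $\lambda$ never appears.

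For (b), I take $T=2(\gamma\otimes f^3+f^3\otimes\gamma)$ and $\tfrac12\mathcal{L}_XT$ verbatim from the proof of Proposition~\ref{prop:left}. The component equations then read:
\[
(3,3):\ \tfrac12+2\rho=\mu+\nu,\qquad (a,3):\ 2\lambda\gamma-x^3\delta=0,\qquad (a,b):\ 2\gamma\otimes\gamma=(\mu+\tfrac32)\,\mathrm{Id}.
\]
The rank argument forces $\mu=-\tfrac32$ and $\gamma=0$, so $x^1=x^2=0$, $\rho=0$ and $\delta=0$. The first equation then gives $\nu=2$, and the $(a,3)$ equation holds trivially. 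Conversely, $X=x^3E_3=x^3K_0$ is Killing, so (a) applies.

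For (c), I repeat the proof of Theorem~\ref{thm:HRS}. Lemma~\ref{lem:Lie} gives $T=\pi^*S+\eta\otimes\beta+\beta\otimes\eta$ and
\[
\mathcal{L}_XT=\pi^*\mathcal{L}_VS+2\beta\otimes\beta+\eta\otimes\mathcal{L}_V\beta+\mathcal{L}_V\beta\otimes\eta.
\]
Substituting into \eqref{etaHRS}, the $\eta\otimes\eta$ coefficient becomes $\tfrac12-\mu-\nu$, which yields \eqref{R0}. The mixed part yields $\lambda\beta+\tfrac12\mathcal{L}_V\beta=0$, which is \eqref{R2}. The horizontal part yields $-\tfrac32 g_{S_a}+\lambda S+\tfrac12\mathcal{L}_VS+\beta\otimes\beta=\mu g_{S_a}$, which is \eqref{R1}. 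The converse holds because the decomposition is a direct sum.

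Part (d) is where the only real argument lies. If $V$ is conformal, write $S=\varphi g_{S_a}$ with $\mathcal{L}_VS=(V\varphi+\varphi^2)g_{S_a}$; this also covers the Killing and zero cases with $\varphi=0$. Then \eqref{R1} says that $\beta\otimes\beta$ is a function multiple of $g_{S_a}$, and since rank $\le1$ is incompatible with rank $2$, we get $\beta=0$ and $\lambda\varphi+\tfrac12(V\varphi+\varphi^2)=\mu+\tfrac32$ pointwise. From $d\beta=\operatorname{div}(V)\mathrm{vol}=\varphi\,\mathrm{vol}$ (the paper's normalization $d\beta=\operatorname{div}(V)\,\mathrm{vol}_B$, with $\varphi$ proportional to the divergence) we get $\varphi=0$, so $V$ is Killing and $\mu=-\tfrac32$. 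Then \eqref{R0} gives $\nu=2$. Finally, $T=\pi^*S+\eta\otimes\beta+\beta\otimes\eta=0$, so $X$ itself is Killing.

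The main obstacle is in (d): one must be careful that $\beta=0$ does imply $\varphi\equiv0$ through $d\beta$. This relies on the identity $d\beta=\operatorname{div}(V)\,\mathrm{vol}_B$, which follows from $d\iota_V\mathrm{vol}=\mathcal{L}_V\mathrm{vol}$, and on the fact that the conformal factor in $S=\varphi g$ is a nonzero multiple of $\operatorname{div}V$ in dimension $2$.
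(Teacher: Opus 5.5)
Your proposal is correct and follows the paper's proof essentially step for step: the same Killing reduction in (a), the same Lie-derivative computation and rank argument in (b), the same horizontal/mixed/vertical splitting via Lemma~\ref{lem:Lie} in (c), and the same $\beta\otimes\beta$ rank argument combined with $d\beta=\operatorname{div}(V)\,\mathrm{vol}$ in (d). The only cosmetic difference is in (d), where you absorb the zero and Killing cases into the conformal case with $\varphi=0$ and read off $\mu=-\tfrac32$, $\nu=2$ directly from \eqref{R1} and \eqref{R0}; the paper instead treats the zero and Killing cases first, reduces the conformal case back to them, and then invokes (a).
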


\begin{proof}
\textbf{(a)} If $\mathcal{L}_Xg^S=0$, then $\mathcal{L}_X(\mathcal{L}_Xg^S)=0$, and \eqref{etaHRS} reads $\operatorname{Ric}^S=\mu g^S+\nu\,\eta\otimes\eta$. By Theorem~\ref{thm:einstein},
\[
\bigl(\mu+\tfrac32\bigr)\,g^S+(\nu-2)\,\eta\otimes\eta=0 .
\]
Evaluating on $(E_1,E_1)$ gives $\mu+\tfrac32=0$; evaluating on $(E_3,E_3)$ gives $\mu+\tfrac32+\nu-2=0$, hence $\nu=2$. Conversely, for these values the equation holds for every $\lambda$ and every Killing field $X$.

\medskip
\textbf{(b)} Since the $x^i$ are constant, $\mathcal{L}_Xf^k=\iota_Xdf^k$, and by \eqref{struct}
\[
\mathcal{L}_Xf^1=x^3f^2-x^2f^3,\quad \mathcal{L}_Xf^2=x^1f^3-x^3f^1,\quad \mathcal{L}_Xf^3=\gamma,\qquad \gamma:=x^1f^2-x^2f^1 .
\]
Hence $T:=\mathcal{L}_Xg^S=\sum_k\bigl(\mathcal{L}_Xf^k\otimes f^k+f^k\otimes\mathcal{L}_Xf^k\bigr)=2(\gamma\otimes f^3+f^3\otimes\gamma)$, the terms in $x^3$ cancelling in pairs. Moreover $\mathcal{L}_X\gamma=\rho f^3-x^3\delta$ with $\rho=(x^1)^2+(x^2)^2$ and $\delta=x^1f^1+x^2f^2$, so
\[
\tfrac12\mathcal{L}_XT=2\rho\,f^3\otimes f^3-x^3(\delta\otimes f^3+f^3\otimes\delta)+2\,\gamma\otimes\gamma .
\]
With $\operatorname{Ric}^S=-\tfrac32(f^1\otimes f^1+f^2\otimes f^2)+\tfrac12f^3\otimes f^3$, equation \eqref{etaHRS} gives
\[
\begin{aligned}
(3,3):&\quad \tfrac12+2\rho=\mu+\nu,\\
(a,3),\ a=1,2:&\quad 2\lambda\gamma-x^3\delta=0,\\
(a,b),\ a,b\in\{1,2\}:&\quad 2\,\gamma\otimes\gamma=\bigl(\mu+\tfrac32\bigr)\mathrm{Id}.
\end{aligned}
\]
In the last equation the left-hand side has rank at most $1$, while the right-hand side has rank $2$ unless $\mu=-\tfrac32$; then $\gamma=0$, i.e.\ $x^1=x^2=0$ and $\rho=0$. The first equation gives $\tfrac12=-\tfrac32+\nu$, so $\nu=2$, and the second equation holds trivially. Conversely, $X=x^3E_3=x^3K_0$ is Killing and (a) applies.

\medskip
\textbf{(c)} By Lemma~\ref{lem:Lie}, $T:=\mathcal{L}_Xg^S=\pi^*S+\eta\otimes\beta+\beta\otimes\eta$ (we omit $\pi^*$ before $\beta$). Since $\mathcal{L}_X\eta=\beta$, $\mathcal{L}_X\beta=\mathcal{L}_V\beta$ and $\mathcal{L}_X\pi^*S=\pi^*\mathcal{L}_VS$,
\[
\mathcal{L}_XT=\pi^*\mathcal{L}_VS+2\,\beta\otimes\beta+\eta\otimes\mathcal{L}_V\beta+\mathcal{L}_V\beta\otimes\eta .
\]
Using $\operatorname{Ric}^S=-\tfrac32\pi^*g_{S_{a}}+\tfrac12\eta\otimes\eta$ and $g^S=\pi^*g_{S_{a}}+\eta\otimes\eta$, equation \eqref{etaHRS} becomes
\[
\begin{aligned}
&\pi^*\Bigl(-\tfrac32g_{S_{a}}+\lambda S+\tfrac12\mathcal{L}_VS+\beta\otimes\beta-\mu\,g_{S_{a}}\Bigr)\\
&\quad+\eta\otimes\Bigl(\lambda\beta+\tfrac12\mathcal{L}_V\beta\Bigr)+\Bigl(\lambda\beta+\tfrac12\mathcal{L}_V\beta\Bigr)\otimes\eta\\
&\quad+\Bigl(\tfrac12-\mu-\nu\Bigr)\eta\otimes\eta=0 .
\end{aligned}
\]
Setting the $(a,b)$, $(a,3)$ and $(3,3)$ components in the coframe $(e^i)$ to zero gives \eqref{R1}, \eqref{R2} and \eqref{R0}.

\medskip
\textbf{(d)} Put $a:=\mu+\tfrac32$. If $V=0$ or $V$ is Killing, then $S=0$ and \eqref{R1} reads $\beta\otimes\beta=a\,g_{S_{a}}$. The left side has rank at most $1$ and the right side has rank $2$ unless $a=0$; hence $a=0$ and $\beta=0$. If $V$ is conformal, $S=\varphi\,g_{S_{a}}$ with $\varphi=\operatorname{div}V$, and \eqref{R1} gives
\[
\beta\otimes\beta=\Bigl(a-\lambda\varphi-\tfrac12\bigl(V\varphi+\varphi^2\bigr)\Bigr)g_{S_{a}},
\]
which forces $\beta=0$ for the same rank reason. Then $0=d\beta=\varphi\,\mathrm{vol}_{S_a}$, so $\varphi=0$, $V$ is Killing, and we are back in the previous case with $a=0$. In all cases $S=0$ and $\beta=0$, so $\mathcal{L}_Xg^S=\pi^*S+\eta\otimes\beta+\beta\otimes\eta=0$: $X$ is Killing. Then (a) gives $\mu=-\tfrac32$ and $\nu=2$.
\end{proof}

\begin{proposition}\label{prop:shear}
Let $X$ be $\theta$-invariant with $V=\rho(y)\,\partial_x$ on an open connected set. If $(g^S,X,\lambda,\mu,\nu)$ is an $\eta$-hyperbolic Ricci soliton, then $\rho'=0$ (so $V=\rho_0\partial_x$ is Killing), and therefore $X$ is a Killing field with $\mu=-\tfrac32$, $\nu=2$.
\end{proposition}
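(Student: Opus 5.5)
The plan is to reduce the statement to part (d) of Theorem~\ref{thm:etaHRS}: it suffices to show $\rho'=0$, because then $V=\rho_0\partial_x$ is Killing and (d) gives that $X$ is Killing with $\mu=-\tfrac32$, $\nu=2$. To get $\rho'=0$, I would write the system \eqref{R0}--\eqref{R2} of Theorem~\ref{thm:etaHRS}(c) explicitly in the coordinates $(x,y)$ for $V=\rho(y)\partial_x$, and then exploit the rank-one structure of $\beta\otimes\beta$ together with $d\beta=\operatorname{div}(V)\,\mathrm{vol}_{S_a}$.

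\textbf{Computing $S$, $\mathcal{L}_VS$ and $\operatorname{div}V$.} Since $w$ depends only on $y$, we have $V(w)=0$, $\mathcal{L}_V dx=\rho'\,dy$ and $\mathcal{L}_V dy=0$. This gives
\[
S=w\rho'\,(dx\otimes dy+dy\otimes dx),\qquad \mathcal{L}_VS=2w\rho'^2\,dy\otimes dy,\qquad \operatorname{div}V=0 .
\]
In particular $d\beta=0$.

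\textbf{Components of \eqref{R1}.} Write $\beta=\beta_x\,dx+\beta_y\,dy$ and set $a:=\mu+\tfrac32$. Equation \eqref{R1} splits into
\[
\beta_x^2=a\,w,\qquad \beta_x\beta_y=-\lambda w\rho',\qquad \beta_y^2=w\,(a-\rho'^2).
\]
Since $w>0$, the first equation forces $a\ge 0$.

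\textbf{Case $a=0$.} Then $\beta_x=0$, and the third equation reads $\beta_y^2=-w\rho'^2$. Both sides have opposite signs, so $\rho'=0$, which is what we want.

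\textbf{Case $a>0$ (to be excluded).} Here $\beta_x^2=aw(y)>0$, so on the connected domain $\beta_x=\sigma\sqrt{a\,w(y)}$ with a constant sign $\sigma=\pm1$. In particular $\beta_x$ is independent of $x$ and never vanishes. I then use the two remaining equations.
\begin{itemize}[label={}]
\item \emph{Step 1: $\lambda=0$.} The $dx$-component of $\mathcal{L}_V\beta$ is $\rho\,\partial_x\beta_x=0$. Equation \eqref{R2} then gives $-2\lambda\beta_x=0$, hence $\lambda=0$.
\item \emph{Step 2: $\beta_y=0$.} With $\lambda=0$, the mixed equation becomes $\beta_x\beta_y=0$, so $\beta_y=0$.
\item \emph{Step 3: contradiction.} Closedness gives $\partial_x\beta_y=\partial_y\beta_x$. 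Using $(\sqrt w)'=-c\sqrt w$ from \eqref{P}, the right-hand side is $-\sigma\sqrt a\,c\sqrt w$, while the left-hand side is $0$. So $c=k\cot(ky)$ would vanish on an open set, which is false.
\end{itemize}
Hence $a>0$ is impossible, and we are always in the case $a=0$, $\rho'=0$.

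\textbf{Expected obstacle.} The main delicate point is the case $a>0$. There one must be careful that $\beta_x$ has a well-defined constant sign on the connected open set, and that the $dx$-component of $\mathcal{L}_V\beta$ really involves only $\rho\,\partial_x\beta_x$. If an index convention in $\mathcal{L}_V\beta$ turns out differently, I would instead combine the determinant identity $a^2=(a+\lambda^2)\rho'^2$ (which follows from the rank-one condition) with $d\beta=0$ to reach the same contradiction.
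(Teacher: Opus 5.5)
Your proof is correct and follows the paper's argument: you compute $S$ and $\mathcal{L}_VS$ the same way, split \eqref{R1} into the cases $a=0$ and $a>0$ by sign and rank, and reach the contradiction between $d\beta=\operatorname{div}(V)\,\mathrm{vol}_{S_a}=0$ and $\partial_y\beta_x=-\sigma\sqrt a\,c\sqrt w$, which cannot vanish on an open set. The one difference is your detour through \eqref{R2} to obtain $\lambda=0$ and $\beta_y=0$ first; the paper does not need it, because $\beta_y=-\lambda w\rho'/\beta_x$ already depends only on $y$, so $\partial_x\beta_y=0$ and closedness gives the contradiction directly.
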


\begin{proof}
Since $\partial_x$ is Killing and $w$ depends only on $y$, we have $\mathcal{L}_Vdx=\rho'\,dy$, $\mathcal{L}_Vdy=0$ and $V(w\rho')=0$. Hence
\[
S=w\rho'\,(dx\otimes dy+dy\otimes dx)=\rho'\,(e^1\otimes e^2+e^2\otimes e^1),\qquad
\mathcal{L}_VS=2\rho'^2\,e^2\otimes e^2 .
\]
Put $a=\mu+\tfrac32$. By \eqref{R1}, $\beta\otimes\beta=a\,g_{S_{a}}-\lambda S-\tfrac12\mathcal{L}_VS$, whose matrix in the frame $(e_1,e_2)$ is
\[
\begin{pmatrix} a & -\lambda\rho'\\ -\lambda\rho' & a-\rho'^2\end{pmatrix}.
\]
This matrix is positive semidefinite of rank at most $1$, so $a\ge0$, $a-\rho'^2\ge0$ and $a(a-\rho'^2)=\lambda^2\rho'^2$.

\emph{Case $a=0$.} Then $-\rho'^2\ge0$, so $\rho'=0$.

\emph{Case $a>0$.} Then $a^2=(a+\lambda^2)\rho'^2$, so $\rho'^2=a^2/(a+\lambda^2)$ is constant, and $\beta=\varepsilon\bigl(\sqrt a\,e^1-\tfrac{\lambda\rho'}{\sqrt a}\,e^2\bigr)$ with a locally constant sign $\varepsilon=\pm1$. Since $e^1=\sqrt w\,dx$, $e^2=\sqrt w\,dy$ and $(\sqrt w)'=-c\sqrt w$,
\[
d\beta=\varepsilon\sqrt a\,d(\sqrt w)\wedge dx-\varepsilon\tfrac{\lambda\rho'}{\sqrt a}\,d(\sqrt w)\wedge dy
=\varepsilon\sqrt a\,c\sqrt w\,dx\wedge dy,
\]
which is nonzero for $y\neq a/2$. But $d\beta=\operatorname{div}(V)\,\mathrm{vol}_{S_a}=0$ because $\operatorname{div}(\rho(y)\partial_x)=\tfrac1w\partial_x(w\rho)=0$. This is a contradiction.

Hence $\rho'=0$, $V=\rho_0\partial_x$ is Killing, and Theorem~\ref{thm:etaHRS}\,(d) concludes.
\end{proof}



\begin{thebibliography}{99}

\bibitem{Ahlfors}
L.~Ahlfors, \emph{Complex Analysis}, 3rd ed., McGraw-Hill, 1979.

\bibitem{Bonola}
R.~Bonola, \emph{Non-Euclidean Geometry: A Critical and Historical Study of its Development}, Dover, 1955.

\bibitem{doCarmo}
M.~do Carmo, \emph{Differential Geometry of Curves and Surfaces}, Prentice-Hall, 1976.

\bibitem{FRAC}
G.~Fasihi-Ramandi, S.~Azami and M.~A.~Choudhary, \emph{On hyperbolic Ricci solitons}, Chaos Solitons Fractals \textbf{193} (2025), 116095. \url{https://doi.org/10.1016/j.chaos.2025.116095}

\bibitem{Hamilton}
R.~S.~Hamilton, \emph{Three-manifolds with positive Ricci curvature}, J.\ Differential Geom.\ \textbf{17} (1982), 255--306.

\bibitem{KongLiu}
D.-X.~Kong and K.~Liu, \emph{Wave character of metrics and hyperbolic geometric flow}, J.\ Math.\ Phys.\ \textbf{48} (2007), 103508.

\bibitem{Milnor}
J.~Milnor, \emph{Curvatures of left invariant metrics on Lie groups}, Adv.\ Math.\ \textbf{21} (1976), 293--329.

\bibitem{Nemeth}
S.~Z.~N\'emeth, \emph{Five kinds of monotone vector fields}, Pure Math.\ Appl.\ \textbf{9} (1998), 417--428.

\bibitem{Perelman}
G.~Perelman, \emph{The entropy formula for the Ricci flow and its geometric applications}, arXiv:math/0211159 (2002).

\bibitem{Petersen}
P.~Petersen, \emph{Riemannian Geometry}, 3rd ed., Graduate Texts in Mathematics, vol.~171, Springer, 2016.
\bibitem{ChoKimura}
J.~T.~Cho and M.~Kimura, \emph{Ricci solitons and real hypersurfaces in a complex space form}, Tohoku Math.\ J.\ \textbf{61} (2009), 205--212.
\bibitem{Sasaki}
S.~Sasaki, \emph{On the differential geometry of tangent bundles of Riemannian manifolds}, Tohoku Math.\ J.\ \textbf{10} (1958), 338--354.

\bibitem{Tashiro}
Y.~Tashiro, \emph{On contact structures of tangent sphere bundles}, Tohoku Math.\ J.\ \textbf{21} (1969), 117--143.

\bibitem{Kowalski}
O.~Kowalski, \emph{Curvature of the induced Riemannian metric on the tangent bundle of a Riemannian manifold}, J.\ Reine Angew.\ Math.\ \textbf{250} (1971), 124--129.

\bibitem{Blaga}
A.~M.~Blaga, \emph{$\eta$-Ricci solitons on para-Kenmotsu manifolds}, Balkan J.\ Geom.\ Appl.\ \textbf{20} (2015), 1--13.

\bibitem{BoyerGalicki}
C.~P.~Boyer and K.~Galicki, \emph{Sasakian Geometry}, Oxford University Press, 2008.

\bibitem{Besse}
A.~L.~Besse, \emph{Einstein Manifolds}, Springer, 1987.
\bibitem {am}
 M. Lyubich, notes/livre sur les modèles du plan hyperbolique
\end{thebibliography}
\end{document}